\newtheorem{theorem}{Theorem}
\newtheorem{proposition}{Proposition}%
\newtheorem{lemma}{Lemma}%
\newtheorem{corollaryC}{Corollary of Conjecture}
\newtheorem{remark}{Remark}%
\newtheorem{definition}{Definition}
\newtheorem{conjecture}{Conjecture}
\newcommand{\Ttran}{\mathsf{T}}
\newcommand{\fro}{\mathsf{F}}
\newcommand{\defi}{:=}
\newcommand{\van}{\mathsf{Van}}
\newcommand{\poly}{\mathbb{P}}
\newcommand{\chim}{\chi_{\mathsf{mono}}}
\newcommand{\chic}{\chi_{\mathsf{coef}}}
\newcommand{\cl}{(\mathsf{cl})}
\newcommand{\fp}{F}
\newcommand{\R}{\mathbb{R}}
\newcommand{\rk}{\mathrm{rank}}
\newcommand{\range}{\mathsf{range}}
\newcommand{\gap}{\mathsf{relgap}}
\newcommand{\cheb}{\mathsf{Cheb}}
\newcommand{\fig}{eps}
\newcommand{\figsizeD}{0.45\textwidth}
\providecommand{\diagm}{\mathsf{diag}}
\providecommand{\diagM}[1]{\mathsf{diag}(#1)}
\providecommand{\abs}[1]{\lvert#1\rvert}
\providecommand{\norm}[1]{\lVert#1\rVert}
\providecommand{\bigabs}[1]{\bigl\lvert#1\bigr\rvert}
\providecommand{\bignorm}[1]{\bigl\lVert#1\bigr\rVert}
\providecommand{\Bignorm}[1]{\Bigl\lVert#1\Bigr\rVert}
\newtheorem*{customthm}{\customthmname}
\newenvironment{namedthm}[1]
  {\newcommand{\customthmname}{#1}\begin{customthm}}
  {\end{customthm}}
\crefname{assumption}{Assumption}{Assumptions}
\crefname{conjecture}{Conjecture}{Conjectures}
\crefname{lemma}{Lemma}{Lemma}
\crefname{corollaryC}{Corollary of Conjecture}{}
\crefname{proposition}{Proposition}{Propositions}
\crefname{equation}{}{}
\begin{document}

\title{A structural bound for cluster robustness of randomized small-block Lanczos}
\author{
    Nian Shao\thanks{
        Institute of Mathematics, EPF Lausanne, 1015 Lausanne, Switzerland
        (\href{mailto:nian.shao@epfl.ch}{nian.shao@epfl.ch}).
    }
}

\maketitle

\begin{abstract}
    The Lanczos method is a fast and memory-efficient algorithm for solving large-scale symmetric eigenvalue problems. However, its rapid convergence can deteriorate significantly when computing clustered eigenvalues due to a lack of cluster robustness. A promising strategy to enhance cluster robustness---without substantially compromising convergence speed or memory efficiency---is to use a random small-block initial, where the block size is greater than one but still much smaller than the cluster size. This leads to the Randomized Small-Block Lanczos (RSBL) method. Despite its empirical effectiveness, RSBL lacks the comprehensive theoretical understanding already available for single-vector and large-block variants. In this paper, we develop a structural bound that supports the cluster robustness of RSBL by leveraging tools from matrix polynomials. We identify an intrinsic theoretical challenge stemming from the non-commuting nature of matrix multiplication. To provide further insight, we propose a conjectured probabilistic bound for cluster robustness and validate it through empirical experiments. Finally, we discuss how insights into cluster robustness can enhance our understanding of RSBL for both eigenvalue computation and low-rank approximation.
\end{abstract}
\section{Introduction}
Given a symmetric matrix $A\in \R^{n\times n}$ and an initial $\Omega\in\R^{n\times b}$, the $\ell$th block Krylov subspace is generated by successive applications of $A$ to $\Omega$:
\begin{equation*}
    \mathcal{K}_{\ell}(A,\Omega) \defi \range[\Omega,A\Omega,\dotsc,A^{\ell-1}\Omega].
\end{equation*}
With an orthonormal basis computed by the block Lanczos process, the block Lanczos method then performs Rayleigh--Ritz extraction
from the resulting block Krylov subspace.
The computed approximations of eigenvalues and eigenvectors of $A$ are called Ritz values and Ritz vectors.

Compared to subspace iteration, a particularly attractive feature of Krylov subspace methods is their ability to extract significantly more Ritz pairs than the block size $b$.
This enables a high-degree polynomial acceleration while requiring only a moderate number of matrix-vector multiplications (matvecs) with $A$. In fact, in most modern scientific computing environments---such as Matlab, Python, and Julia---the default large-scale eigensolvers (e.g., \texttt{eigs} in Matlab) are based on a single-vector ($b = 1$) Arnoldi/Lanczos method, usually with implicit restarting \cite{Lehoucq1998,wu2000thick}, even when many eigenvalues are sought.

A known limitation of the single-vector Lanczos is its lack of \emph{cluster robustness}. Specifically, when computing a cluster of closely spaced eigenvalues, the single-vector Lanczos method suffers from slow convergence due to small spectral gaps.
A particularly extreme and notorious example is null space computation \cite{kressner2024randomized}, where single-vector Lanczos---and even small-block variants with very small block sizes---may fail to correctly determine the null space dimension.
Indeed, whenever two eigenvalues of interest are separated by a tiny gap, the convergence of single-vector Lanczos method becomes significantly slower \cite{Meyer2023,kressner2024randomized}.
Unfortunately, in many practical eigenvalue problems, clustered or multiple eigenvalues frequently arise, often as a consequence of symmetries or periodic structures inherent in physical models \cite{seyranian1994multiple,bathe1973solution}.
In such cases, a single-vector Lanczos method becomes less favorable as the modest gains in convergence speed are insufficient to offset the high communication costs~\cite{Ballard2014}, particularly in large-scale parallel computing environments.  
Yet even on a personal computer, multiplying a $1000\times 1000$ (dense) Gaussian random matrix by $16$ vectors in Matlab takes only about $3.3\times$ the time needed for one matvec. 
Often, as noted in \cite[Sec.~13.12]{Parlett1998}, it is suggested to choose a large block size $b$ that is at least as large as the size of the eigenvalue cluster of interest.  This strategy, which we refer to as the large-block Lanczos method, resolves the challenges posed by clustered eigenvalues and can be implemented in a communication-avoiding manner \cite[Sec.~1.6.2]{Hoemmen2010}.
However, it typically requires significantly more matvecs and substantially more memory than the single-vector Lanczos method, particularly when the eigenvalue cluster is large and the restarting is employed.

One strategy to find a compromise between the cluster robustness and low communication cost of large-block Lanczos with the fast convergence and memory efficiency of the single-vector variant is to employ a (Randomized) Small-Block Lanczos (RSBL) method, where the block size is greater than one but still significantly smaller than the cluster size. It is clear that RSBL has better communication cost than single-vector Lanczos and lower memory requirement than the large-block variant. The following example shows that, in fact, unless we are in an extreme situation where eigenvalues are of high multiplicity, such as null space computation, RSBL is already robust enough. 
Let $A = \diagm\{\Lambda,\Lambda_{\perp}\}\in\R^{2000\times 2000}$, where $\Lambda\in\R^{32\times 32}$ and $\Lambda_{\perp}$ are diagonal matrices containing eigenvalues uniformly spaced in $[1,1+\beta]$ and $[-1,0]$, respectively, and $\beta$ is a parameter to control the radius of the cluster. We run RSBL with Gaussian random initial matrix $\Omega\in\R^{2000\times b}$ to compute all eigenvalues in $\Lambda$. When executing block Lanczos with full reorthogonalization in double precision, and declaring the error of all Ritz values less than $10^{-10}$ as the convergence criterion, we obtain the results reported in~\cref{fig:intro}.

\begin{figure}[htbp]
    \centering
    \includegraphics[width=\figsizeD]{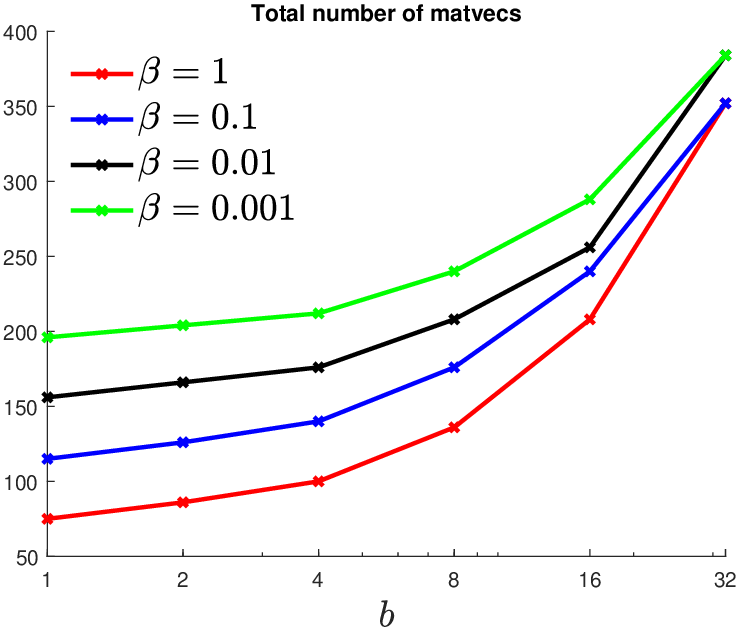}
    \includegraphics[width=\figsizeD]{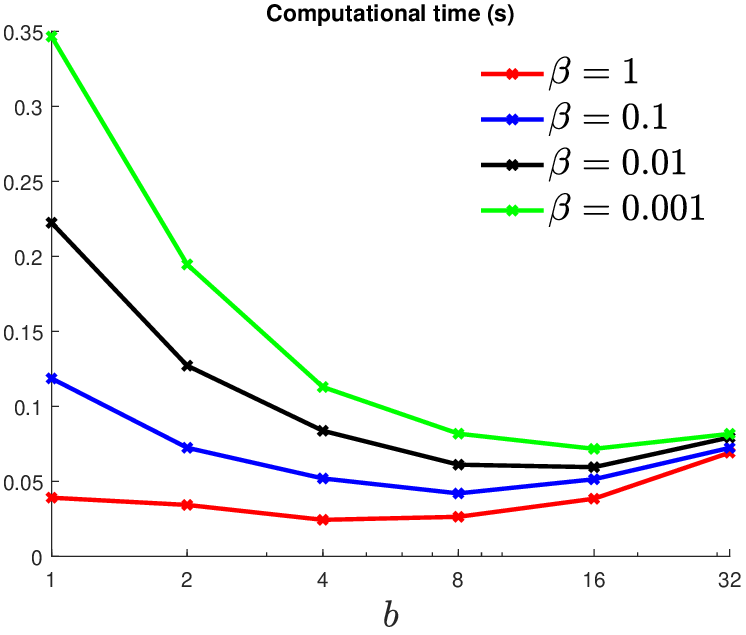}
    \caption{Total number of matvecs and computational time by block Lanczos required for the convergence of 32 eigenvalues in a cluster with $\beta$ being the cluster diameter.}
    \label{fig:intro}
\end{figure}

While the number of matvecs required by the single-vector Lanczos method is consistently lower than that of block variants, the relative overhead incurred by RSBL quickly becomes negligible as the target eigenvalues form a tighter cluster. When communication costs are considered, RSBL will achieve a lower overall computational time.
Other illustrative examples that demonstrate the improved cluster robustness of small-block initials include the block Sakurai--Sugiura method \cite{Ikegami2010} and the block variant of Rayleigh--Ritz method with contour integrals \cite{Ikegami2010rr} for computing interior eigenvalues.

Despite the empirical effectiveness of RSBL, there exists surprisingly limited theoretical understanding, particularly regarding their robustness to eigenvalue clusters. Most existing analyses of (randomized) Lanczos methods for eigenvalue computation or low-rank approximation focuses on the single-vector case \cite{kressner2024randomized,Meyer2023,Saad1980,Kaniel1966} or the large-block case \cite{Tropp2022,Li2015,Musco2015,Drineas2018,persson2025randomized,tropp2023randomized}. 
Let $A\in\R^{n\times n}$ be a symmetric matrix with spectral decomposition
    \begin{equation*}
        A = \begin{bmatrix}
            Q & Q_{\perp}
        \end{bmatrix} 
        \begin{bmatrix}
            \Lambda&\\ 
            &\Lambda_{\perp}
        \end{bmatrix}
        \begin{bmatrix}
            Q & Q_{\perp}
        \end{bmatrix}^{\Ttran},
        \quad \text{where}\quad \Lambda = \diagm\{ \Lambda_{1},\dotsc,\Lambda_{d} \}
    \end{equation*}
    with $\Lambda_{i}\in\R^{b\times b}$ for $1\leq i\leq d$. Given the block size $b$ of $\Lambda_{i}$, we define the \emph{$b$th order (relative) eigenvalue gap} by 
    \begin{equation}
        \label{eq:defgap}
    \gap_{b} \defi \min_{\substack{1\leq i\neq j\leq d\\     
    \lambda_{i}\in \Lambda_{i}, \lambda_{j}\in \Lambda_{j}\\ }}
    \frac{\abs{\lambda_{i}-\lambda_{j}}}{\lambda_{\max}-\lambda_{\min}},
\end{equation}
where $\lambda_{\min}$ and $\lambda_{\max}$ are the smallest and largest eigenvalues of $A$, respectively. 
In \cite[Thm.~4.5]{Meyer2023}, a theoretical analysis establishes that the convergence of RSBL depends on $\gap_{b}$. This property enables RSBL to effectively approximate matrices with multiple leading singular values, provided their multiplicity does not exceed $b$.
However, this result indicates that matvec complexity of RSBL scales linearly with the block size $b$, which does not fully justify its observed effectiveness. This gap in understanding motivates our investigation in this direction.
Independently of our work, \cite{Chen2026} analyzes RSBL for low-rank approximations; we provide a detailed discussion of their results at the end of this section.

In this paper, we develop a new approach for analyzing the RSBL on eigenvalue problems. 
Following the structure of \cite{Meyer2023,kressner2024randomized}, 
we quantify cluster robustness using the metric
\begin{equation}
\label{eq:defangle}
\tan\angle (\range(Q),\mathcal{K}_{d}(A,\Omega)),
\end{equation}
where $\angle$ is the largest principal angle.
This quantity reduces to  \cite[Eq.~(3.5)]{kressner2024randomized} when $b=1$, which plays an important role in the convergence of (randomized) single-vector Lanczos method for eigenvalue problems.
To obtain an upper bound, we adopt tools from matrix polynomials developed in \cite{Dennis1976} and derive an explicit interpolation formula for matrix polynomials on certain structured matrices. This result generalizes classical Lagrange interpolation for scalar polynomials, which has been crucial in the analysis of randomized single-vector Lanczos methods.
Under mild spectral gap assumptions---specifically, the multiplicity of desired eigenvalues does not exceed $b$---we provide the following structural bound for \cref{eq:defangle} when $\Omega$ is a Gaussian random matrix. 

\begin{namedthm}{Abridged of \cref{thm:sb}}
    Let $\Omega\in\R^{n\times b}$ be a Gaussian random matrix. Assume that $\gap_{b}>0$, then, with high probability, 
\begin{equation*}
    \tan\angle\bigl(\range(Q),\mathcal{K}_{d}(A,\Omega)\bigr) \leq c_{\Omega}\cdot \Bigl(\frac{\chim\chic}{\gap_{b}}\Bigr)^{d-1},
\end{equation*}
where $c_{\Omega}$ is a constant depending on $n,b,d$ and the failure probability, $\chim$ and $\chic$ are quantities to be specified later in \cref{eq:defchi}.
\end{namedthm}

However, the presence of non-commuting products of random matrices, an intrinsic challenge in the analysis of RSBL, prevents us from obtaining a rigorous probabilistic bound for $\chim$ and $\chic$.
Instead, we propose a conjectured probabilistic bound, and support this conjecture with empirical numerical evidence.
Finally, we discuss how our results contribute to a deeper understanding of RSBL for eigenvalue computation and low-rank approximation.

\begin{remark}
    \label{rmk:intro}
The quantity in \cref{eq:defangle} is not expected to be small, as it scales with the notoriously high condition numbers of Vandermonde-type matrices. Nevertheless, this does not hinder the effectiveness of RSBL. Indeed, for an $\ell\gg d$, the following bound holds:
\begin{equation*}
    \tan \angle(\range(Q),\mathcal{K}_{\ell}(A,\Omega)) \leq c\gamma^{\ell-d}\tan\angle(\range(Q),\mathcal{K}_{d}(A,\Omega)),
\end{equation*}
where $c>0$ and $0<\gamma<1$ are constants independent of $\ell$. This illustrates the exponential convergence of RSBL; see \cref{sec:app} for details.
\end{remark}

\paragraph{Relation to \cite{Chen2026}.}
Using a fundamentally different approach based primarily on matrix anti-concentration results from \cite{Nie2022}, a cluster robustness-type result is established in \cite[Thm.~4.1]{Chen2026} for scenarios where \emph{all $b\cdot d$ target eigenvalues are well-separated}, that is, the standard (relative) eigenvalue gap $\gap_{1}$ is non-negligible.
However, in \cref{sec:numexp}, empirical results demonstrate that cluster robustness depends on $\gap_{b}$ rather than $\gap_{1}$.
While \cite{Chen2026} also \emph{conjectures} that $\gap_{1}$ can be replaced by $\gap_{b}$, proving this will likely require techniques different from those in \cite{Chen2026}.
In contrast, in our framework, the $\gap_{b}$ arises naturally from the interpolation of matrix polynomials; see \cref{thm:interpolation} for further details.

\paragraph*{Notation.}
We use $\norm{\cdot}$ to denote the Euclidean norm of a vector and the spectral norm of a matrix. For a family of (non-commuting) matrices $A_{1},\dotsc,A_{d}$, we denote $\prod_{i=1}^{d}A_{i} = A_{1}A_{2}\dotsb A_{d}$.

\section{Matrix polynomials}
\label{sec:mplm}
To analyze RSBL, we employ techniques involving matrix polynomials, as introduced in \cite{Dennis1976}. These mathematical tools play an important role in the analysis of systems and control theory \cite{Sontag1998, Lancaster2002}. In the context of block Krylov subspace methods, they are also used in the analysis of linear system solvers \cite{Kubinova2020, Simoncini1996} and computation of matrix functions \cite{Frommer2020}.

\begin{definition}[Matrix polynomial]
    Let $C_{0},\dotsc,C_{d},X\in\R^{b\times b}$. Then we define a matrix polynomial\footnote{When $X=\lambda I$ for $\lambda\in\R$ is a scalar matrix, the matrix polynomial is also referred to as a lambda-matrix.} of degree at most $d$ by $\Phi: \R^{b\times b}\mapsto \R^{b\times b}$ as 
    \begin{equation}
        \label{eq:defPhiX}
        \Phi(X) = C_{0}+XC_{1}+\dotsb+X^{d}C_{d}.
    \end{equation} 
    We denote the subspace that contains all such matrix polynomials of degree at most $d$ by $\poly_{d}(\R^{b\times b})$.
\end{definition}

We remind readers that in our definition of the matrix polynomial~\cref{eq:defPhiX}, the coefficients $C_{i}$ appear to the right of the variable $X$, which differs from the convention in \cite{Dennis1976}.

\subsection{Upper bound on $\norm{\Phi(B)}$}
The first result we develop is an upper bound of the spectral norm of $\Phi(B)$ for diagonalizable $B$ with real eigenvalues.
\begin{lemma}
    \label{lem:boundMP}
    Suppose that $B\in\R^{b\times b}$ admits an eigenvalue decomposition $B = \Omega^{-1} \Lambda \Omega$, where $\Lambda$ is a diagonal matrix containing real eigenvalues $\lambda^{(1)}\leq \dotsb \leq \lambda^{(b)}$. For any matrix polynomial $\Phi\in\poly_{d}(\R^{b\times b})$, 
    \begin{equation*}
        \norm{\Phi(B)}\leq \sqrt{b}\norm{\Omega^{-1}}\norm{\Omega}\max_{\lambda^{(1)}\leq \lambda \leq \lambda^{(b)}}\norm{\Phi(\lambda I)}.
    \end{equation*}
\end{lemma}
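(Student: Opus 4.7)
The plan is to leverage the eigendecomposition $B_{0} = \Omega_{0}^{-1}\Lambda_{0}\Omega_{0}$ to reduce the non-commuting matrix polynomial $\Phi(B_{0})$---whose coefficients $C_{k}$ sit to the right of powers of $B_{0}$---to a row-by-row evaluation of the scalar lambda-matrix $\Phi(\lambda)$. Since $B_{0}^{k} = \Omega_{0}^{-1}\Lambda_{0}^{k}\Omega_{0}$, I would first pull $\Omega_{0}^{-1}$ out to the left and write
\begin{equation*}
    \Phi(B_{0}) = \Omega_{0}^{-1} \sum_{k=0}^{d} \Lambda_{0}^{k}\,(\Omega_{0} C_{k}).
\end{equation*}
Because $\Lambda_{0}$ is diagonal, multiplying $\Omega_{0} C_{k}$ on the left by $\Lambda_{0}^{k}$ simply rescales its $i$-th row by $(\lambda_{0}^{(i)})^{k}$. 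Summing in $k$, the $i$-th row of the bracketed sum equals the $i$-th row of $\Omega_{0}$, multiplied on the right by $\sum_{k=0}^{d}(\lambda_{0}^{(i)})^{k}C_{k} = \Phi(\lambda_{0}^{(i)})$, which is exactly the lambda-matrix from~\cref{eq:defPhil}. This row-wise decoupling is the one non-routine observation.

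From there the remainder is submultiplicativity plus standard norm inequalities. Let $M$ denote the row-stacked matrix whose $i$-th row is $(\Omega_{0})_{i,:}\,\Phi(\lambda_{0}^{(i)})$, so that $\Phi(B_{0}) = \Omega_{0}^{-1} M$ and hence $\norm{\Phi(B_{0})} \leq \norm{\Omega_{0}^{-1}}\,\norm{M}$. I would then bound the spectral norm of $M$ by its Frobenius norm and apply submultiplicativity row by row:
\begin{equation*}
    \norm{M}^{2} \leq \norm{M}_{\fro}^{2} \leq \sum_{i=1}^{b}\norm{(\Omega_{0})_{i,:}}^{2}\,\norm{\Phi(\lambda_{0}^{(i)})}^{2} \leq \Bigl(\max_{1\leq i\leq b}\norm{\Phi(\lambda_{0}^{(i)})}^{2}\Bigr)\,\norm{\Omega_{0}}_{\fro}^{2}.
\end{equation*}
Combining this with $\norm{\Omega_{0}}_{\fro} \leq \sqrt{b}\,\norm{\Omega_{0}}$ and enlarging the discrete maximum over $\{\lambda_{0}^{(i)}\}_{i=1}^{b}$ to the continuous maximum over $[\lambda_{0}^{(1)},\lambda_{0}^{(b)}]$ yields the claimed inequality.

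The main conceptual obstacle is the row-wise evaluation identity in the first step. With the coefficients $C_{k}$ placed to the right of $X$, the map $\Phi$ is not a similarity invariant, so one cannot write $\Phi(B_{0}) = \Omega_{0}^{-1}\Phi(\Lambda_{0})\Omega_{0}$ as in the scalar polynomial case; nevertheless, the diagonality of $\Lambda_{0}$ still decouples the evaluation row by row into scalar lambda-matrix evaluations $\Phi(\lambda_{0}^{(i)})$. Once this is recognized, the remaining estimates are mechanical, and the $\sqrt{b}$ factor is the unavoidable cost of converting the Frobenius norm of $\Omega_{0}$ (which arises naturally from the row-stacked estimate) into the spectral norm appearing in the statement; for $b=1$ it collapses and the bound reduces to the classical scalar one.
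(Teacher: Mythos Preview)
Your proof is correct and follows essentially the same approach as the paper: factor out $\Omega_{0}^{-1}$, use the diagonality of $\Lambda_{0}$ to reduce the inner matrix to row-wise evaluations $(\Omega_{0})_{i,:}\Phi(\lambda_{0}^{(i)})$, and then apply routine norm inequalities. The only cosmetic difference is that the paper fixes a vector $x$ and works with scalar polynomials $\varphi_{j}(\lambda)=e_{j}^{\Ttran}\Omega_{0}\Phi(\lambda)x$ to extract the $\sqrt{b}$ via $\sum_{j}\leq b\max_{j}$, whereas you bound $\norm{M}\leq\norm{M}_{\fro}$ and pick up the $\sqrt{b}$ from $\norm{\Omega_{0}}_{\fro}\leq\sqrt{b}\,\norm{\Omega_{0}}$; both routes are equivalent in substance.
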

\begin{proof}
    By definition, $\Phi$ takes the form \cref{eq:defPhiX}. For any $x\in\R^{b}$, let $\varphi_{j}(\lambda) = \sum_{i=0}^{d} (e_{j}^{\Ttran}\Omega C_{i}x)\lambda^{i}$ be a scalar polynomial, where $1\leq j\leq b$. By submultiplicativity, 
    \begin{equation*}
        \begin{aligned}
            \norm{\Phi(B)x}^{2} &= \Bignorm{\sum_{i=0}^{d}B^{i}C_{i}x}^{2} = \Bignorm{\Omega^{-1}\sum_{i=0}^{d}\Lambda^{i}\Omega C_{i}x}^{2} 
            \leq \norm{\Omega^{-1}}^{2}\Bignorm{\sum_{i=0}^{d}\Lambda^{i}\Omega C_{i}x}^{2}  
            = \norm{\Omega^{-1}}^{2}\sum_{j=1}^{b}\varphi_{j}^{2}(\lambda^{(j)})  \\ 
            &\leq b\norm{\Omega^{-1}}^{2}\max_{1\leq j\leq b}\varphi_{j}^{2}(\lambda^{(j)}) 
            \leq b\norm{\Omega^{-1}}^{2}\max_{\substack{1\leq j\leq b\\ \lambda^{(1)}\leq \lambda \leq \lambda^{(b)}}} \varphi_{j}^{2}(\lambda)  
            \leq b\norm{\Omega^{-1}}^{2}\max_{\lambda^{(1)}\leq \lambda \leq \lambda^{(b)}}\sum_{j=1}^{b}\varphi_{j}^{2}(\lambda).
        \end{aligned}
    \end{equation*}
    Note that
    \begin{equation*}
        \sum_{j=1}^{b}\varphi_{j}^{2}(\lambda) = \Bignorm{\sum_{i=0}^{d}\lambda^{i}\Omega C_{i}x}^{2} = \norm{\Omega\Phi(\lambda I)x}^{2}\leq \norm{\Omega}^{2} \norm{\Phi(\lambda I)}^{2}\norm{x}^{2}.
    \end{equation*}
    Combining the submultiplicativity with two inequalities above, we have 
    \begin{equation*}
        \norm{\Phi(B)x}^{2} \leq   b\norm{\Omega^{-1}}^{2}\max_{\lambda^{(1)}\leq \lambda \leq \lambda^{(b)}}\sum_{j=1}^{b}\varphi_{j}^{2}(\lambda)
        \leq b\norm{\Omega^{-1}}^{2}\norm{\Omega}^{2} \max_{\lambda^{(1)}\leq \lambda \leq \lambda^{(b)}}\norm{\Phi(\lambda I)}^{2}\norm{x}^{2},
    \end{equation*}
    which proves the lemma.
\end{proof}

\subsection{Solvents and latent roots}
Similar to scalar polynomials, we can also define the generalization of roots as solvents and latent roots~\cite[Sec.~1]{Dennis1976}.
\begin{definition}[Solvent and latent root/vector]
    Given a matrix polynomial $\Phi$ of the form \cref{eq:defPhiX}, if $\Phi(B)=0$, then $B$ is called a left solvent of $\Phi$. If $\Phi(\lambda I)$ is singular for some $\lambda\in\R$, then $\lambda$ is called a latent root of $\Phi$. If $\omega^{\Ttran}\Phi(\lambda I)=0$ for a nonzero vector $\omega$ and a latent root $\lambda$, then $\omega$ is called the corresponding left latent vector.
\end{definition}

In the rest of this paper, we will omit ``left'' before left solvent and left latent vector since we only consider matrix polynomials like \cref{eq:defPhiX}.
Let $\Phi\in\poly_{d}(\R^{b\times b})$ and $B$ be a solvent of $\Phi$. The generalized B\'ezout's theorem \cite[Chap~4.3, Thm.~1]{Gantmacher1998} asserts the existence of a matrix polynomial $\Phi_{B}\in\poly_{d-1}(\R^{b\times b})$ such that
\begin{equation*}
    \Phi(\lambda I) = (\lambda I-B)\Phi_{B}(\lambda I) \quad \forall\, \lambda\in\R.
\end{equation*}
The following lemma, essentially from \cite[Lem.~4.1]{Dennis1976}, provides a further connection between latent pairs and solvent.

\begin{lemma}
    \label{prop:Bezout}
    Suppose that $B$ admits a spectral decomposition $B=\Omega^{-1}\Lambda\Omega$ with real eigenvalues. 
    Given a matrix polynomial $\Phi\in\poly_{d}(\R^{b\times b})$, then $B$ is a solvent of $\Phi$ if and only $\omega^{\Ttran}\Phi(\lambda I) = 0$ holds for all left eigenpairs $(\lambda,\omega)$ of $B$.
\end{lemma}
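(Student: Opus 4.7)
The plan is to exploit a single key identity: for any left eigenpair $(\lambda_0,\omega_0)$ of $B_0$, one has $\omega_0^{\Ttran}\Phi(B_0) = \omega_0^{\Ttran}\Phi(\lambda_0)$. Indeed, from $\omega_0^{\Ttran} B_0 = \lambda_0 \omega_0^{\Ttran}$ an easy induction gives $\omega_0^{\Ttran} B_0^{i} = \lambda_0^{i}\omega_0^{\Ttran}$ for every $i\geq 0$. Applying this to each term of $\Phi(B_{0}) = \sum_{i=0}^{d} B_{0}^{i} C_{i}$ and using that the coefficients $C_i$ sit to the right of the powers of $B_0$ (which is why they survive untouched), one obtains
\begin{equation*}
    \omega_0^{\Ttran}\Phi(B_0) = \sum_{i=0}^{d} \omega_0^{\Ttran} B_0^{i} C_{i} = \sum_{i=0}^{d} \lambda_0^{i}\omega_0^{\Ttran} C_{i} = \omega_0^{\Ttran}\Phi(\lambda_0).
\end{equation*}
This identity immediately gives the forward direction: if $\Phi(B_0)=0$, then for every left eigenpair $\omega_0^{\Ttran}\Phi(\lambda_0) = \omega_0^{\Ttran}\Phi(B_0) = 0$.

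For the reverse direction, I would use the diagonalizability of $B_0$ to produce a complete basis of left eigenvectors. From $B_0 = \Omega_0^{-1}\Lambda_0 \Omega_0$ we have $\Omega_0 B_0 = \Lambda_0 \Omega_0$, so the rows $\omega_0^{(j)\Ttran} \defi e_j^{\Ttran}\Omega_0$ are left eigenvectors associated with $\lambda_0^{(j)}$ for $j=1,\dotsc,b$. These $b$ vectors are linearly independent since $\Omega_0$ is invertible, hence they span $\R^{b}$. Applying the identity above to each such row yields $\omega_0^{(j)\Ttran}\Phi(B_0) = \omega_0^{(j)\Ttran}\Phi(\lambda_0^{(j)}) = 0$ by hypothesis. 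Stacking these into matrix form gives $\Omega_0 \Phi(B_0) = 0$, and invertibility of $\Omega_0$ forces $\Phi(B_0) = 0$.

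There is no real obstacle here beyond bookkeeping; the only subtle point, and the one that makes the statement non-trivial in the matrix-polynomial setting, is that the identity $\omega_0^{\Ttran}\Phi(B_0) = \omega_0^{\Ttran}\Phi(\lambda_0)$ is one-sided. If the coefficients appeared on the left (as in the convention of \cite{Dennis1976}), the corresponding characterization would instead involve right eigenvectors of $B_0$, and the derivation would be symmetric. The real-eigenvalue assumption is only used to keep the lambda-matrix $\Phi(\lambda_0)$ a genuinely real object; the argument itself is purely algebraic and does not require the spectral gap or any norm bound from \cref{lem:boundMP}.
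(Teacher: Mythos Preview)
Your proof is correct and, for the reverse direction, is essentially identical to the paper's: both stack the rows of $\Omega_0$ as left eigenvectors to obtain $\Omega_0\Phi(B_0)=0$ and then invoke the invertibility of $\Omega_0$. The only noteworthy difference is in the forward direction. The paper appeals to the generalized B\'ezout theorem, writing $\Phi(\lambda)=(\lambda I-B_0)\Phi_{B_0}(\lambda)$ and then using $\omega_0^{\Ttran}(\lambda_0 I-B_0)=0$; you instead use the single identity $\omega_0^{\Ttran}\Phi(B_0)=\omega_0^{\Ttran}\Phi(\lambda_0)$ uniformly for both directions. Your route is slightly more self-contained, since it avoids invoking B\'ezout altogether, but the underlying algebra is the same computation viewed two ways.
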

\begin{proof}
    If $B$ is a solvent, then the result is directly from the generalized B\'ezout's theorem. In the other direction, suppose that $\Phi$ takes the form in \cref{eq:defPhiX}, then
    \begin{equation*}
        \Phi(B) = \Omega^{-1}\sum_{i=0}^{d}\Lambda^{i}\Omega C_{i}.
    \end{equation*}
    Note that for any left eigenpair $(\lambda,\omega)$ of $B$, it holds that
    \begin{equation*}
        0 = \omega^{\Ttran}\Phi(\lambda I) = \sum_{i=0}^{d}\lambda^{i}\omega^{\Ttran}C_{i}.
    \end{equation*}
    Stacking all left eigenpairs of $B$ together, we know that $\sum_{i=0}^{d}\Lambda^{i}\Omega C_{i}=0$, and in turn, $\Phi(B)=0$.
\end{proof}

\subsection{Interpolation and fundamental matrix polynomials}

The next ingredient we need from matrix polynomials is interpolation. Let 
\begin{equation}
    \label{eq:defvand}
    \van \defi 
    \begin{bmatrix}
        I & B_{1} & \cdots & B_{1}^{d-1}\\ 
        I & B_{2} & \cdots & B_{2}^{d-1}\\ 
        \vdots & \vdots & \ddots & \vdots \\ 
        I & B_{d} & \cdots & B_{d}^{d-1}\\ 
    \end{bmatrix}\in\R^{bd\times bd}
\end{equation}
be a block Vandermonde matrix, where $B_{i}\in\R^{b\times b}$. When $b=1$, that is, $\Phi$ is a scalar polynomial and $B_{i}\in\R$, a nonsingular (scalar) Vandermonde matrix $\van$ allows us to interpolate $\Phi$ with Lagrange polynomials $\Phi_{i}$  such that $\Phi_{i}(B_{j})=\delta_{ij}$, where $\delta_{ij}=1$ if $i=j$ and $0$ otherwise. When $b>1$, consider the following generalization.

\begin{definition}[Fundamental matrix polynomials]
    \label{def:fp}
    Consider $B_{1},\dotsc,B_{d}\in\R^{b\times b}$. Let $\{\fp_{1},\dotsc,\fp_{d}\}$ be a set of matrix polynomials that satisfy 
    \begin{equation*}
        \fp_{i}(B_{j}) =  \delta_{ij} I \quad\text{for all}\quad  1\leq i\neq j\leq d.
    \end{equation*}
    Then, we say $\{\fp_{1},\dotsc,\fp_{k}\}$ is a set of fundamental matrix polynomials. 
\end{definition}

With fundamental matrix polynomials defined above, \cite[Thm.~5.2 and Cor.~5.1]{Dennis1976}
provide a result for matrix polynomials analogous to Lagrange interpolation, as follows.
\begin{proposition}
    \label{prop:interpolation}
    Given matrices $B_{1},\dotsc,B_{d}\in\R^{b\times b}$, if the matrix $\van$ defined in~\cref{eq:defvand} is nonsingular, then there exists a unique set of fundamental matrix polynomials $\{\fp_{1},\dotsc,\fp_{d}\}$. Moreover, for any matrix polynomial $\Phi\in\poly_{d-1}(\R^{b\times b})$, it holds that  
    \begin{equation*}
        \Phi(\lambda I) = \sum_{k=1}^{d} \fp_{k}(\lambda I)\Phi(B_{k})\quad \forall\, \lambda\in\R.
    \end{equation*}
\end{proposition}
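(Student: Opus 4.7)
The plan is to reduce both assertions to a single elementary observation about block Vandermonde matrices. Following the right-coefficient convention of \cref{eq:defPhiX}, any $F \in \poly_{d-1}(\R^{b\times b})$ with coefficients $C_0,\dotsc,C_{d-1} \in \R^{b\times b}$ satisfies $F(B_j) = \sum_{i=0}^{d-1} B_j^i C_i$ for each $j$, so stacking the evaluations yields
$$\begin{pmatrix} F(B_1) \\ \vdots \\ F(B_d) \end{pmatrix} = \van \begin{pmatrix} C_0 \\ \vdots \\ C_{d-1} \end{pmatrix}.$$
The linear map from coefficient tuples in $(\R^{b\times b})^d$ to value tuples at $B_1,\dotsc,B_d$ is therefore exactly multiplication by $\van$, and the hypothesis that $\van$ is nonsingular turns this map into a bijection. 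Consequently, every prescription of $d$ matrix values is realized by a unique element of $\poly_{d-1}(\R^{b\times b})$.

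For the first claim I would invoke this bijection with the value tuple having $I$ in the $k$th block slot and $0$ elsewhere, which produces unique coefficients and hence a unique $\fp[B_k, \calB_{-k}] \in \poly_{d-1}(\R^{b\times b})$ satisfying the requirement of \cref{def:fp}.

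For the interpolation formula I would define the candidate lambda-matrix
$$\Psi(\lambda) \defi \sum_{k=1}^{d} \fp[B_{k},\calB_{-k}](\lambda)\,\Phi(B_{k}),$$
which lies in $\poly_{d-1}(\R^{b\times b})$ because each summand is a matrix polynomial of degree at most $d-1$ multiplied on the right by a constant matrix. Evaluating the associated matrix polynomial at $B_j$ and using $\fp[B_k, \calB_{-k}](B_j) = \delta_{kj} I$ gives $\Psi(B_j) = \Phi(B_j)$ for each $j$, so $\Phi$ and $\Psi$ share the same value tuple at $B_1,\dotsc,B_d$. The Vandermonde bijection then forces equality of their coefficients, and therefore $\Phi \equiv \Psi$ as lambda-matrices. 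I expect the main obstacle to be purely notational: one must carefully preserve the right-multiplication convention throughout---in particular keeping $\Phi(B_k)$ to the right of $\fp[B_k,\calB_{-k}](\lambda)$ in the definition of $\Psi$---otherwise the Vandermonde identification does not apply after the inevitable rearrangement of terms.
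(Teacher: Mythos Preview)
Your argument is correct. The paper itself does not give a proof of this proposition; it simply cites \cite[Thm.~5.2 and Cor.~5.1]{Dennis1976}, and the argument there is precisely the block Vandermonde bijection you describe: the evaluation map $(C_0,\dotsc,C_{d-1})\mapsto(F(B_1),\dotsc,F(B_d))$ is left multiplication by $\van$, so nonsingularity of $\van$ gives both existence/uniqueness of the fundamental matrix polynomials and the uniqueness step that forces $\Phi\equiv\Psi$. Your care with the right-coefficient convention is exactly the point that makes the computation $\Psi(B_j)=\sum_k \fp[B_k,\calB_{-k}](B_j)\,\Phi(B_k)=\Phi(B_j)$ go through.
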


We should note that, different from the scalar case, $B_{1},\dotsc,B_{d}$ having disjoint spectra is not sufficient to guarantee the nonsingularity of $\van$. For example, let $B_{1}=\begin{bmatrix}
    1 & \\ & 2
\end{bmatrix}$ and $B_{2}=\begin{bmatrix}
    2 & 1\\ -1 & 1
\end{bmatrix}$. Then $B_{1}$ and $B_{2}$ have disjoint spectra, but $\van\cdot[1,-2,-1,1]^{\Ttran}=0$. However, the following proposition shows that $\van$ is nonsingular generically. In particular, if $[\Omega_{1},\dotsc,\Omega_{d}]$ is a Gaussian random matrix, then $\van$ is nonsingular almost surely.

\begin{proposition}
    \label{prop:nonsingular}
    Let $\Omega_{1},\dotsc,\Omega_{d}\in\R^{b\times b}$, and $B_{i}=\Omega_{i}^{-1}\Lambda_{i}\Omega_{i}$ for $i=1,\dotsc,d$ with real diagonal matrices $\Lambda_{i}$. Suppose that $\Lambda_{i}$ and $\Lambda_{j}$ have disjoint spectra for $i\neq j$, then $\van$ is nonsingular generically with respect to $\Omega_{1},\dotsc,\Omega_{d}$ in Lebesgue measure.
\end{proposition}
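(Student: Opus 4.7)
The plan is to reduce nonsingularity of $\van$ to the nonvanishing of a single polynomial on the parameter space $(\R^{b\times b})^d$, and then exhibit one explicit tuple $(\Omega_1,\dotsc,\Omega_d)$ at which $\van$ is nonsingular. Since $B_i = \Omega_i^{-1}\Lambda_i\Omega_i$, every entry of every block $B_i^{k-1}$ is a rational function in the entries of $\Omega_i$ whose denominator is a power of $\det(\Omega_i)$. Hence $\det(\van)$ is a rational function on $(\R^{b\times b})^d$, real-analytic on the open full-measure set where all $\det(\Omega_i)\neq 0$. A standard fact then gives a dichotomy: either $\det(\van)$ vanishes identically, or its zero set has Lebesgue measure zero. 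So it suffices to find \emph{one} specific choice of the $\Omega_i$ at which $\van$ is nonsingular.

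The natural test point is $\Omega_i = I$ for all $i$, which lies in the domain of definition and reduces the problem to showing that the block Vandermonde
\begin{equation*}
    \van_0 \defi
    \begin{bmatrix}
        I & \Lambda_{1} & \cdots & \Lambda_{1}^{d-1}\\
        \vdots & \vdots & \ddots & \vdots \\
        I & \Lambda_{d} & \cdots & \Lambda_{d}^{d-1}
    \end{bmatrix}
\end{equation*}
built from the diagonal matrices $\Lambda_i$ is nonsingular. Because each $\Lambda_i^{k-1}$ is diagonal, the $((i,j),(k,l))$ entry of $\van_0$ (with $1\leq i,k\leq d$ and $1\leq j,l\leq b$) equals $(\lambda_i^{(j)})^{k-1}\delta_{jl}$. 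A permutation that groups rows and columns by the inner index $j=l$ therefore block-diagonalizes $\van_0$ into $b$ scalar Vandermonde blocks $V_j\in\R^{d\times d}$ with nodes $\lambda_1^{(j)},\dotsc,\lambda_d^{(j)}$, one block for each $j=1,\dotsc,b$.

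For each fixed $j$, the nodes of $V_j$ are the $j$-th diagonal entries of $\Lambda_1,\dotsc,\Lambda_d$; these values are pairwise distinct precisely because the $\Lambda_i$ have mutually disjoint spectra (any coincidence $\lambda_i^{(j)}=\lambda_{i'}^{(j)}$ with $i\neq i'$ would put this number in both $\mathrm{spec}(\Lambda_i)$ and $\mathrm{spec}(\Lambda_{i'})$). By the classical determinant formula for scalar Vandermonde matrices, each $V_j$ is nonsingular, hence $\det(\van_0)=\pm\prod_{j=1}^{b}\det(V_j)\neq 0$. This exhibits the required test point and completes the dichotomy argument.

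The only part that requires any care is the first step, namely verifying the measure-zero dichotomy on $(\R^{b\times b})^d$ despite the apparent singularities introduced by $\Omega_i^{-1}$. This is handled by clearing denominators: writing $B_i^{k-1} = \det(\Omega_i)^{-(k-1)}\,\mathrm{adj}(\Omega_i)^{k-1}\Lambda_i^{k-1}\Omega_i^{k-1}/\det(\Omega_i)^{0}$ (or simply multiplying $\van$ on the left by a suitable power of $\prod_i\det(\Omega_i)$) shows that $\det(\van)\cdot\prod_i\det(\Omega_i)^{N}$ is a genuine polynomial in the $\Omega_i$ for some integer $N$, nonvanishing at $\Omega_i=I$, and hence nonvanishing off a proper algebraic set of Lebesgue measure zero. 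Intersecting with the measure-zero set $\{\exists i:\det(\Omega_i)=0\}$ preserves the conclusion, which is the stated genericity.
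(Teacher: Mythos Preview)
Your proof is correct and follows essentially the same approach as the paper: reduce to a polynomial nonvanishing statement, exhibit the test point $\Omega_i=I$, and use the disjoint-spectra hypothesis to see that $\van_0$ factors (after a permutation) into $b$ nonsingular scalar Vandermonde blocks. The paper clears denominators slightly more cleanly by left-multiplying $\van$ by the block diagonal $\diagM{\Omega_1,\dotsc,\Omega_d}$, which directly yields a matrix with polynomial entries $[\Omega_i,\Lambda_i\Omega_i,\dotsc,\Lambda_i^{d-1}\Omega_i]$ and hence $\det\van=(\prod_i\det\Omega_i)^{-1}f(\Omega_1,\dotsc,\Omega_d)$ for a polynomial $f$; this sidesteps the somewhat garbled adjugate expression in your final paragraph (note $B_i^{k-1}=\Omega_i^{-1}\Lambda_i^{k-1}\Omega_i$ involves only a single $\Omega_i^{-1}$, not $(k-1)$ of them).
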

\begin{proof}
    Let
    \begin{equation*}
        f(\Omega_{1},\dotsc,\Omega_{d})\defi \det \begin{bmatrix}
        \Omega_{1} & \Lambda_{1}\Omega_{1} & \cdots & \Lambda_{1}^{d-1}\Omega_{1}\\ 
        \Omega_{2} & \Lambda_{2}\Omega_{2} & \cdots & \Lambda_{2}^{d-1}\Omega_{2}\\  
        \vdots & \vdots & \ddots & \vdots \\
        \Omega_{d} & \Lambda_{d}\Omega_{d} & \cdots & \Lambda_{d}^{d-1}\Omega_{d}\\ 
    \end{bmatrix}.
    \end{equation*}
    It is clear that $f$ is a polynomial of entries of $\Omega_{i}$. Since $\Lambda_{i}$ and $\Lambda_{j}$ have disjoint spectra for $i\neq j$, we know that $f(I_{b},\dotsc,I_{b})\neq 0$, yielding that $f$ is a nonzero polynomial. Note that 
    \begin{equation*}
        \det\van = \det \begin{bmatrix}
            \Omega_{1}&&\\ 
            &\ddots &\\ 
            &&\Omega_{d}
        \end{bmatrix}^{-1}
        \det \begin{bmatrix}
        \Omega_{1} & \Lambda_{1}\Omega_{1} & \cdots & \Lambda_{1}^{d-1}\Omega_{1}\\ 
        \Omega_{2} & \Lambda_{2}\Omega_{2} & \cdots & \Lambda_{2}^{d-1}\Omega_{2}\\  
        \vdots & \vdots & \ddots & \vdots \\
        \Omega_{d} & \Lambda_{d}\Omega_{d} & \cdots & \Lambda_{d}^{d-1}\Omega_{d}\\ 
    \end{bmatrix}
    =\Bigl(\prod_{i=1}^{d}\det(\Omega_{i})\Bigr)^{-1}f(\Omega_{1},\dotsc,\Omega_{d}).   
    \end{equation*}
    We know that $\det \van\neq 0$ generically.
\end{proof}

\subsection{Explicit formula for fundamental matrix polynomials}

The fundamental theorem of algebra asserts that a scalar polynomial can be factored into a product of (complex) linear functions. This property, however, does not generally extend to matrix polynomials. Nevertheless, under certain additional assumptions on the solvents, the following lemma shows that a matrix polynomial $\Phi$ can indeed be decomposed into a product of some degree-one matrix polynomials when the variable is a scalar matrix.

\begin{lemma}
    \label{lem:Chain}
    Suppose that $B_{1},\dotsc,B_{d}\in\R^{b\times b}$ admit spectral decompositions $B_{i}=\Omega_{i}^{-1}\Lambda_{i}\Omega_{i}$, where $\Lambda_{i}$ are real diagonal matrices for $i=1,\dotsc,d$. Assume $\Lambda_{i}$ and $\Lambda_{j}$ have disjoint spectra for $i\neq j$.
    We define the following quantities recursively: For each $i=d,d-1,\dotsc,2,1$, let  
    \begin{equation*}
        \widehat{\Omega}_{i} \defi \Omega_{i}S_{i,d}\quad\text{and}\quad  
        \widehat{B}_{i}\defi \widehat{\Omega}_{i}^{-1}\Lambda_{i}\widehat{\Omega}_{i},
    \end{equation*}
    where $S_{i,d}\in\R^{b\times b}$ is defined by the following recurrence:
    \begin{equation*}
        S_{i,i} \defi  I_{b},\quad  S_{i,j} \defi  B_{i}S_{i,j-1}-S_{i,j-1}\widehat{B}_{j} \quad\text{for}\quad j=i+1,\dotsc,d.
    \end{equation*} 
    Assuming that the matrix $\van$ in \cref{eq:defvand} is nonsingular, and all $S_{i,d}$ are nonsingular for $1\leq i\leq d$, then the fundamental matrix polynomial $\fp_{1}$ defined in \cref{def:fp} admits the following formula: 
    \begin{equation}
    \label{eq:deffp1}
        \fp_{1}(\lambda I) = \Bigl(\prod_{i=2}^{d}(\lambda I-\widehat{B}_{i})\Bigr)S_{1,d}^{-1}\quad \forall\, \lambda\in\R.
    \end{equation}
\end{lemma}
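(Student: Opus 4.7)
The plan is to verify the formula by checking that the right-hand side satisfies the interpolation conditions $\fp[B_1,\calB_{-1}](B_j) = \delta_{1j} I$ for $j=1,\dotsc,d$; the uniqueness of fundamental matrix polynomials from \cref{prop:interpolation} then identifies this candidate as the desired object. Since right-multiplication by $S_{1,d}^{-1}$ simply rescales the right-coefficients of a matrix polynomial, the candidate lies in $\poly_{d-1}(\R^{b\times b})$, and its matrix polynomial evaluation at $X$ equals $P(X)\,S_{1,d}^{-1}$, where $P$ denotes the matrix polynomial associated with the lambda-matrix $\prod_{i=2}^d(\lambda I - \widehat{B}_i)$.

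For $j=1$, I would establish $P(B_1) = S_{1,d}$ by induction on the number of linear factors. The elementary identity driving the induction is: if a lambda-matrix $\Psi(\lambda)$ corresponds to a matrix polynomial $\Psi(X)$, then the lambda-matrix $\Psi(\lambda)(\lambda I - \widehat{B})$ corresponds to the matrix polynomial $X\Psi(X) - \Psi(X)\widehat{B}$---which has exactly the form of the $S$-recurrence $S_{1,j} = B_1 S_{1,j-1} - S_{1,j-1}\widehat{B}_j$. This immediately gives $\fp[B_1,\calB_{-1}](B_1) = P(B_1)\,S_{1,d}^{-1} = I$. For $j\in\{2,\dotsc,d\}$, I would invoke the generalized B\'ezout criterion from \cref{prop:Bezout}: $B_j$ is a solvent of $P$ if and only if $\omega^{\Ttran}\prod_{i=2}^d(\mu I - \widehat{B}_i) = 0$ (lambda-matrix evaluation) for every left eigenpair $(\mu,\omega)$ of $B_j$. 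The structural input here is the intertwining $B_i S_{i,d} = S_{i,d}\widehat{B}_i$, which follows from $\widehat{B}_i = S_{i,d}^{-1} B_i S_{i,d}$: in particular $\widehat{B}_j$ is similar to $B_j$, and $\omega^{\Ttran} S_{j,d}$ is a left eigenvector of $\widehat{B}_j$ with eigenvalue $\mu$, so $\omega^{\Ttran} S_{j,d}(\mu I - \widehat{B}_j) = 0$.

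The hard part will be handling the non-commuting prefix $\omega^{\Ttran}(\mu I - \widehat{B}_2)\cdots(\mu I - \widehat{B}_{j-1})$. The plan is to push this prefix through the recursive structure by repeatedly substituting $\mu I - \widehat{B}_i = S_{i,d}^{-1}(\mu I - B_i)S_{i,d}$ and exploiting the nested relationships among the $S_{i,d}$'s, eventually matching it---up to an invertible right factor---with $\omega^{\Ttran} S_{j,d}$; the factor $(\mu I - \widehat{B}_j)$ then annihilates it and the tail factors $(\mu I - \widehat{B}_{j+1}),\dotsc,(\mu I - \widehat{B}_d)$ act on the resulting zero. Making this reduction rigorous will crucially rely on the disjointness of the spectra of the $\Lambda_i$'s---which renders the relevant Sylvester-type operators injective---and on the standing nonsingularity of each $S_{i,d}$.
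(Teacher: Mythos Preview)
Your overall strategy—verify the interpolation conditions directly and then invoke uniqueness from \cref{prop:interpolation}—is a genuinely different route from the paper's. The paper works top-down: it starts from the (already existing) fundamental polynomial $\Phi_{d-1}:=\fp[B_1,\calB_{-1}]$, uses $\Phi_{d-1}(B_d)=0$ together with the generalized B\'ezout theorem to peel off the first linear factor, and then inductively factors the successive quotients $\Phi_{i-1}$. The engine of that induction is the \emph{suffix} identity
\[
\omega_i^{\Ttran}\prod_{k=i+1}^{d}(\lambda_i I-\widehat B_k)=\omega_i^{\Ttran}S_{i,d}
\qquad\text{for every left eigenpair }(\lambda_i,\omega_i)\text{ of }B_i,
\]
which (via \cref{prop:Bezout}) shows that $\widehat B_i$ is a solvent of $\Phi_{i-1}$ and can therefore be split off. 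Crucially, $\Phi_{d-1}(B_j)=0$ for $j\ge 2$ enters as \emph{input} from the definition of a fundamental matrix polynomial; the paper never derives it from the product formula.

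Your treatment of $j=1$ is correct and clean: the right-factoring identity you isolate reproduces the $S_{1,j}$-recurrence verbatim, giving $P(B_1)=S_{1,d}$. The gap is in your plan for $j\ge 2$. The matrix $S_{j,d}$ is built from $B_j$ and $\widehat B_{j+1},\dotsc,\widehat B_d$ only—it encodes the \emph{suffix} of the product. The prefix $\omega^{\Ttran}(\mu I-\widehat B_2)\dotsm(\mu I-\widehat B_{j-1})$ involves instead $\widehat B_2,\dotsc,\widehat B_{j-1}$, and there is no recursive relation linking these to $S_{j,d}$. Substituting $\mu I-\widehat B_i=S_{i,d}^{-1}(\mu I-B_i)S_{i,d}$ does not telescope, because the conjugators $S_{i,d}$ for different $i$ are not nested in any usable way. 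So there is no mechanism to convert the prefix into $\omega^{\Ttran}S_{j,d}$ times an invertible factor; and even if there were, right-multiplying by an invertible matrix before applying $(\mu I-\widehat B_j)$ would destroy the annihilation unless that matrix commuted with $\widehat B_j$. This is exactly the non-commutativity obstacle that the paper's top-down factoring sidesteps: by carrying the unknown quotient $\Phi_{i-1}$ along and using $\Phi_{d-1}(B_i)=0$ as given data, the argument only ever needs the suffix identity above, never a prefix reduction.
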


When $b=1$, that is, $B_{i}=\widetilde{B}_{i}=\Lambda_{i}$ is a scalar, \cref{lem:Chain} yields that  
\begin{equation*}
    S_{1,d} = (\lambda_{1}-\lambda_{d-1})S_{1,d-1} = \dotsb = \prod_{j=2}^{d}(\lambda_{1}-\lambda_{d}).
\end{equation*}
In turn, $F_{1}(\cdot)$ reduces to the standard Lagrange basis of polynomials:
\begin{equation*}
    F_{1}(\lambda) = \prod_{i=2}^{d}\frac{\lambda-\lambda_{i}}{\lambda_{1}-\lambda_{i}}.
\end{equation*}
Before proving \cref{lem:Chain}, we provide a diagram to illustrate the recursive construction of variables in \cref{lem:Chain} as \cref{fig:diagBinChain}. 
The construction involves a two-level loop. In the outer loop, we define $\widehat{\Omega}_{i}$ and $\widehat{B}_{i}$ in a backward manner: Starting with $\widehat{\Omega}_{d}=\Omega_{d}$ and $\widehat{B}_{d}=B_{d}$, we recursively define $\widehat{\Omega}_{d-1},\widehat{B}_{d-1}, \widehat{\Omega}_{d-2},\widehat{B}_{d-2}$, and so on. 
To define $\widehat{\Omega}_{i}$ and $\widehat{B}_{i}$, assuming that $\widehat{\Omega}_{i+1},\dotsc,\widehat{\Omega}_{d}$ and $\widehat{B}_{i+1},\dotsc,\widehat{B}_{d}$ are already constructed, we enter the inner loop, where we define $S_{i,i+1},\dotsc,S_{i,d}$ in a forward manner. These intermediate matrices are then used to define $\widehat{\Omega}_{i}=\Omega_{i}S_{i,d}$ and subsequently $\widehat{B}_{i}=\widehat{\Omega}_{i}^{-1}\Lambda_{i}\widehat{\Omega}_{i}$.

\begin{figure}[H]
    \centering
\begin{tikzpicture}[
  node distance=0.6cm and 1.5 cm,
  every node/.style={draw, minimum width=1.2cm, minimum height=1cm, align=center}
]
\node (a) {$B_{i},\Omega_{i}$};
\node[right=of a] (b) {$B_{i+1},\Omega_{i+1}$};
\node[right=1.5cm of b] (c) {$B_{d-1},\Omega_{d-1}$};
\node[right=of c] (d) {$B_{d},\Omega_{d}$};

\node[below=of a] (a2) {$\widehat{B}_{i},\widehat{\Omega}_{i}$};
\node[below=of b] (b2) {$\widehat{B}_{i+1},\widehat{\Omega}_{i+1}$};
\node[below=of c] (c2) {$\widehat{B}_{d-1},\widehat{\Omega}_{d-1}$};
\node[below=of d] (d2) {$\widehat{B}_{d},\widehat{\Omega}_{d}$};

\draw[->, dashed, shorten <=2pt, >=Latex, line width=0.8pt] (c2) -- (b2);
\draw[->, shorten <=2pt, >=Latex, line width=0.8pt] (d2) -- (c2);
\draw[->, shorten <=2pt, >=Latex, line width=0.8pt] (b2) -- (a2);

\draw[->, shorten <=2pt, >=Latex, line width=0.8pt] (d) -- (d2);
\draw[->, shorten <=2pt, >=Latex, line width=0.8pt] (c) -- (c2);
\draw[->, shorten <=2pt, >=Latex, line width=0.8pt] (b) -- (b2);
\draw[->, shorten <=2pt, >=Latex, line width=0.8pt] (a) -- (a2);

\node[below=of c2] (c3) {$S_{d-1,d-1}$};
\node[below=of d2] (d3) {$S_{d-1,d}$};
\node[draw=none, fill=none, inner sep=0pt,below=of a2] (a3) {};
\node[draw=none, fill=none, inner sep=0pt,below=of b2] (b3) {};

\draw[->, shorten <=2pt, >=Latex, line width=0.8pt] (d2) -- (d3);
\draw[->, shorten <=2pt, >=Latex, line width=0.8pt] (c3) -- (d3);

\draw[->, shorten <=2pt, >=Latex, line width=0.8pt] (d3) -- (c2);

\node[draw=none, fill=none, inner sep=0pt,below=0.1cm of a3] (a4) {};
\node[draw=none, fill=none, inner sep=0pt,below=0.1cm of b3] (b4) {$\vdots$};

\node[draw=none, fill=none, inner sep=0pt,right=0.6cm of b4]{$\vdots$};
\node[draw=none, fill=none, inner sep=0pt,below=0.1cm of c3] (c4) {$\vdots$};
\node[draw=none, fill=none, inner sep=0pt,below=0.1cm of d3] (d4) {$\vdots$};


\node[below=0.1cm of a4] (a5) {$S_{i,i}$};
\node[below=0.1cm of b4] (b5) {$S_{i,i+1}$};

\node[below=0.1cm of c4] (c5) {$S_{i,d-1}$};
\node[below=0.1cm of d4] (d5) {$S_{i,d}$};

\draw[->, shorten <=2pt, >=Latex, line width=0.8pt] (a5) -- (b5);

\draw[->, dashed, shorten <=2pt, >=Latex, line width=0.8pt] (b5) -- (c5);
\draw[->, shorten <=2pt, >=Latex, line width=0.8pt] (c5) -- (d5);

\draw[->, dashed, shorten <=2pt, >=Latex, line width=0.8pt, bend right=40] ($ (b2)!0.5!(c2) $) to ($ (b5)!0.5!(c5) $) ;
\draw[->, shorten <=2pt, >=Latex, line width=0.8pt, bend right=40] (b2) to (b5);
\draw[->, shorten <=2pt, >=Latex, line width=0.8pt, bend right=40] (c2) to (c5);
\draw[->, shorten <=2pt, >=Latex, line width=0.8pt] (d5) to (a2);
\draw[->, shorten <=2pt, >=Latex, line width=0.8pt, bend left=40] (d2) to (d5);

\end{tikzpicture}

    \caption{Diagram for defining $\widehat{B}_{i}$ and $\widehat{\Omega}_{i}$.}
    \label{fig:diagBinChain}
\end{figure}

\begin{proof}[Proof of \cref{lem:Chain}]
    The proof is based on mathematical induction. Denote $\Phi_{d-1}=\fp_{1}\in\poly_{d-1}(\R^{b\times b})$.
    For the base case, since $\Phi_{d-1}(B_{d})=0$, the generalized B\'ezout's theorem yields the existence of a matrix polynomial $\Phi_{d-2}\in\poly_{d-2}(\R^{b\times b})$, such that  
    \begin{equation*}
        \Phi_{d-1}(\lambda I) = (\lambda I-B_{d})\Phi_{d-2}(\lambda I) = (\lambda I-\widehat{B}_{d})\Phi_{d-2}(\lambda I) \quad \forall\,\lambda\in\R.
    \end{equation*}
    We then consider the general case.
    Suppose that we have 
    \begin{equation}
        \label{eq:indasp}
        \Phi_{d-1}(\lambda I) = \Bigl(\prod_{j=i+1}^{d}(\lambda I-\widehat{B}_{j})\Bigr)\Phi_{i-1}(\lambda I) \quad \forall\,\lambda\in\R
    \end{equation}
    for some $2\leq i\leq d-1$, where $\Phi_{i-1}\in\poly_{i-1}(\R^{b\times b})$.
    Since $B_{i}$ is a solvent of $\Phi_{d-1}$, for any left eigenpair $(\lambda_{i},\omega_{i})$ of $B_{i}$, \cref{prop:Bezout} yields that 
    \begin{equation}
        \label{eq:lemLamMat1}
        0 = \omega_{i}^{\Ttran}\Phi_{d-1}(\lambda_{i} I) = \omega_{i}^{\Ttran}\Bigl(\prod_{j=i+1}^{d}(\lambda_{i} I-\widehat{B}_{j})\Bigr)\Phi_{i-1}(\lambda_{i} I).
    \end{equation}
    Since $\Lambda_{i}$ and $\Lambda_{j}$ have disjoint spectra, we know that the vector  $\omega_{i}^{\Ttran}\prod_{j=i+1}^{d}(\lambda_{i} I-\widehat{B}_{j})$ is nonzero, and therefore is a latent vector of $\Phi_{i-1}$ corresponding to the latent root $\lambda_{i}$.
    Since
    \begin{equation}
        \label{eq:solventS}
        \begin{aligned}
            &\omega_{i}^{\Ttran}\prod_{j=i+1}^{d}(\lambda_{i} I-\widehat{B}_{j}) = (\omega_{i}^{\Ttran}B_{i}-\omega_{i}^{\Ttran}\widehat{B}_{i+1})\prod_{j=i+2}^{d}(\lambda_{i} I-\widehat{B}_{j})\\ 
            =\ &\omega_{i}^{\Ttran}S_{i,i+1}\prod_{j=i+2}^{d}(\lambda_{i} I-\widehat{B}_{j}) = (\omega_{i}^{\Ttran}B_{i}S_{i,i+1}-\omega_{i}^{\Ttran}S_{i,i+1}\widehat{B}_{i+2})\prod_{j=i+3}^{d}(\lambda_{i} I-\widehat{B}_{j})\\ 
            =\ &\omega_{i}^{\Ttran}S_{i,i+2}\prod_{j=i+3}^{d}(\lambda_{i} I-\widehat{B}_{j})= \dotsb = \omega_{i}^{\Ttran}S_{i,d-1}(\lambda_{i} I-\widehat{B}_{d}) = \omega_{i}^{\Ttran}S_{i,d},
        \end{aligned}
    \end{equation}
    we know that $\widehat{\Omega}_{i}=\Omega_{i}S_{i,d}$ is a nonsingular matrix whose rows are the latent vectors of $\Phi_{i-1}$, with the corresponding latent roots populating the diagonal of $\Lambda_{i}$.
    Then \cref{prop:Bezout} yields that $\widehat{B}_{i}$ is a solvent of $\Phi_{i-1}$. In turn, the generalized B\'ezout's theorem asserts the existence of a matrix polynomial $\Phi_{i-2}\in\poly_{i-2}(\R^{b\times b})$, such that 
    \begin{equation*}
        \Phi_{i-1}(\lambda I) = (\lambda I-\widehat{B}_{i})\Phi_{i-2}(\lambda I)\quad \forall\,\lambda\in\R.
    \end{equation*}
    Plugging it in \cref{eq:indasp} and using mathematical induction, we obtain 
    \begin{equation*}
        \Phi_{d-1}(\lambda I) = \Bigl(\prod_{i=2}^{d}(\lambda I-\widehat{B}_{i})\Bigr)\Phi_{0}\quad \forall\,\lambda\in\R,
    \end{equation*}
    where $\Phi_{0}$ is a coefficient matrix to be determined. Note that $\Phi_{d-1}(B_{1}) = \fp_{1}(B_{1})=I$, that is, $B_{1}$ is a solvent of $\Phi_{d-1}(\cdot)-I$. With arguments similar to \cref{eq:lemLamMat1,eq:solventS}, we know $\Omega_{1}S_{1,d}\Phi_{0} - \Omega_{1} = 0$, implying that $\Phi_{0}=S_{1,d}^{-1}$.
\end{proof}

The matrices $\widehat{B}_{i}$ from \cref{lem:Chain} for $i=2,\dotsc,d$ are referred to as a chain of solvents \cite[Def.~4.1]{Dennis1976} of the matrix polynomial $\fp_{1}$. This chain of  solvents can be used to represent the coefficients $C_{i}$ in \cref{eq:defPhiX} via elementary symmetric (matrix-valued) functions \cite[Thm.~4.6]{Dennis1976}.
It is important to note that, in general, \emph{only} $\widehat{B}_{d}=B_{d}$  is guaranteed to be a solvent of $\fp_{1}$.
Another remark regarding \cref{lem:Chain} is that the nonsingularity of $\van$ is \emph{not} a sufficient condition for ensuring the nonsingularity of all $S_{i,d}$. This is due to the fact that certain submatrices of $\van$ with block Vandermonde structure, such as $\begin{bmatrix}
    I & B_{1}\\ I & B_{2}
\end{bmatrix}$, can be singular even if $\van$ itself is nonsingular, see \cite[p.~841]{Dennis1976} for details. 
In the following, we show that the fundamental matrix polynomial $\fp_{1}$ in~\cref{eq:deffp1} is well-defined generically.
\begin{proposition}
    \label{prop:wellpose}
    Let $\Omega_{1},\dotsc,\Omega_{d}\in\R^{b\times b}$, and $B_{i}=\Omega_{i}^{-1}\Lambda_{i}\Omega_{i}$ for $i=1,\dotsc,d$ with real diagonal matrices $\Lambda_{i}$. Suppose that $\Lambda_{i}$ and $\Lambda_{j}$ have disjoint spectra for $i\neq j$, then all $S_{i,d}$ in \cref{lem:Chain} are nonsingular for $1\leq i\leq d$, and, hence, $\fp_{1}$ in~\cref{eq:deffp1} is well-defined generically with respect to $\Omega_{1},\dotsc,\Omega_{d}$ in Lebesgue measure. 
\end{proposition}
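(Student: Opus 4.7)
The plan is to exhibit a single evaluation of $(\Omega_{1},\dotsc,\Omega_{d})$ at which every $S_{i,d}$ is nonsingular, and then invoke the fact that the zero set of a nonzero polynomial on $\R^{db^{2}}$ has Lebesgue measure zero. First, I would argue, by backward induction on $i=d,d-1,\dotsc,1$, that $S_{i,d}$ can be expressed as a matrix of rational functions in the entries of $\Omega_{1},\dotsc,\Omega_{d}$, so that $\det S_{i,d}=P_{i}/Q_{i}$ for polynomials $P_{i},Q_{i}$. Indeed, $S_{d,d}=I$; assuming that $S_{j,d}$ is rational and generically nonsingular for every $j>i$, the matrix $\widehat{B}_{j}=(\Omega_{j}S_{j,d})^{-1}\Lambda_{j}(\Omega_{j}S_{j,d})$ is rational wherever $\det\Omega_{j}\cdot\det S_{j,d}\neq 0$, and the recursion $S_{i,j}=B_{i}S_{i,j-1}-S_{i,j-1}\widehat{B}_{j}$ produces $S_{i,d}$ as a rational function of $\Omega_{1},\dotsc,\Omega_{d}$.

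The key step will be to verify that each $P_{i}$ is not identically zero, which I would do by evaluating at the single point $\Omega_{1}=\dotsb=\Omega_{d}=I_{b}$. There $B_{j}=\Lambda_{j}$ is diagonal for every $j$, and a straightforward backward induction on $j$---using commutativity of diagonal matrices in the recursion for $S_{i,j}$---yields $\widehat{B}_{j}=\Lambda_{j}$ together with
\begin{equation*}
    \left.S_{i,d}\right|_{\Omega_{1}=\dotsb=\Omega_{d}=I_{b}}=\prod_{k=i+1}^{d}(\Lambda_{i}-\Lambda_{k}).
\end{equation*}
This is a diagonal matrix whose $r$th diagonal entry equals $\prod_{k=i+1}^{d}(\lambda_{i}^{(r)}-\lambda_{k}^{(r)})$, which is nonzero under the disjoint-spectra hypothesis on the $\Lambda_{i}$'s. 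Hence each $P_{i}$ is a nonzero polynomial in the entries of $\Omega_{1},\dotsc,\Omega_{d}$, and both $\{P_{i}=0\}$ and $\{Q_{i}=0\}$ are proper algebraic subvarieties of $\R^{db^{2}}$, hence of Lebesgue measure zero. Taking the intersection of the $2d$ Zariski-open sets $\{P_{i}\neq 0\}\cap\{Q_{i}\neq 0\}$ over $1\leq i\leq d$ gives a generic set on which all $S_{i,d}$ are simultaneously nonsingular and \cref{lem:Chain} produces a well-defined $\fp[B_{1},\calB_{-1}]$.

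The main obstacle I anticipate is the self-referential flavor of the construction: $S_{i,d}$ depends on $\widehat{B}_{i+1},\dotsc,\widehat{B}_{d}$, whose very definition requires the later $S_{j,d}$ to be invertible. This apparent circularity is resolved cleanly by running the induction in the same backward order as the recursion in \cref{lem:Chain}, so that at each stage the newly constructed rational functions are only divided by quantities already known to be generically nonzero. Modulo this bookkeeping, the argument is essentially the same density argument used in the proof of \cref{prop:nonsingular}, applied to the sequence of determinants $\det S_{d,d},\det S_{d-1,d},\dotsc,\det S_{1,d}$ instead of $\det\van$.
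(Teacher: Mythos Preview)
Your proposal is correct and takes essentially the same approach as the paper: both evaluate at the special point $\Omega_{1}=\dotsb=\Omega_{d}=I_{b}$, observe (by the same backward induction through the recursion) that there $\widehat{B}_{j}=\Lambda_{j}$ and $S_{i,d}=\prod_{k=i+1}^{d}(\Lambda_{i}-\Lambda_{k})$ is nonsingular diagonal, and then invoke the ``nonzero polynomial has measure-zero zero set'' principle as in \cref{prop:nonsingular}. The paper additionally records the factorization $S_{i,k}=\prod_{j}(B_{i}-S_{i,k-j}\widehat{B}_{k-j+1}S_{i,k-j}^{-1})$ and applies the \cref{prop:nonsingular} trick to each factor separately, whereas you argue directly that $\det S_{i,d}$ is a rational function nonzero at the identity; this is a minor packaging difference, not a genuinely different route.
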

\begin{proof}
    The proof follows a similar idea to the proof of \cref{prop:nonsingular}.
    We first show that if all $\Omega_{i}$ are identity matrices, then 
    \begin{equation}
        \label{eq:appind}
        S_{i,k} \text{ is a nonsingular diagonal matrix}\quad\text{and}\quad \widehat{B}_{i}=\Lambda_{i} 
    \end{equation}
    holds for all $1\leq i\leq k\leq d$.
    We verify this by an induction for $i$ from $d$ to $1$. The basic situation comes from the definition $S_{d,d}=I$ and $\widehat{B}_{d}=\Omega_{d}^{-1}\Lambda_{d}\Omega_{d}=\Lambda_{d}$. Suppose that \cref{eq:appind} holds for $i+1,\dotsc,d$.
    For any $i< k\leq d$, since 
    \begin{equation}
        \label{eq:relationSik}
        \begin{aligned}
            S_{i,k} &= B_{i}S_{i,k-1}-S_{i,k-1}\widehat{B}_{k} = (B_{i}-S_{i,k-1}\widehat{B}_{k}S_{i,k-1}^{-1})S_{i,k-1}= \dotsb \\ 
            &= \Bigl(\prod_{j=1}^{k-i}(B_{i}-S_{i,k-j}\widehat{B}_{k-j+1}S_{i,k-j}^{-1})\Bigr)S_{i,i} 
            = \prod_{j=1}^{k-i}(B_{i}-S_{i,k-j}\widehat{B}_{k-j+1}S_{i,k-j}^{-1}) 
            \\ 
            & = \prod_{j=1}^{k-i}\bigl(\Omega_{i}^{-1}\Lambda_{i}\Omega_{i}-(\widehat{\Omega}_{k-j+1}S_{i,k-j}^{-1})^{-1}\Lambda_{k-j+1}(\widehat{\Omega}_{k-j+1}S_{i,k-j}^{-1})\bigr)\\ 
            & = \prod_{j=1}^{k-i}\bigl(\Lambda_{i}-(S_{k-j+1,d}S_{i,k-j}^{-1})^{-1}\Lambda_{k-j+1}(S_{k-j+1,d}S_{i,k-j}^{-1})\bigr),
        \end{aligned}
    \end{equation}
    where we use $\Omega_{i}=I_{d}$ in the last equality. 
    Making another induction for $k$ from $i$ to $d$ and recalling that $\widehat{B}_{i}\defi \widehat{\Omega}_{i}^{-1}\Lambda_{i}\widehat{\Omega}_{i}$, we obtain \cref{eq:appind}.
    Now taking determinant on both sides of \cref{eq:relationSik}, we have 
    \begin{equation*}
        \begin{aligned}
            \det S_{i,k} &= \prod_{j=1}^{k-i}\det \bigl(\Omega_{i}^{-1}\Lambda_{i}\Omega_{i}-(\widehat{\Omega}_{k-j+1}S_{i,k-j}^{-1})^{-1}\Lambda_{k-j+1}(\widehat{\Omega}_{k-j+1}S_{i,k-j}^{-1})\bigr)\\ 
            &= \prod_{j=1}^{k-i}\det \begin{bmatrix}
                I & I \\ 
                (\widehat{\Omega}_{k-j+1}S_{i,k-j}^{-1})^{-1}\Lambda_{k-j+1}(\widehat{\Omega}_{k-j+1}S_{i,k-j}^{-1})
                & \Omega_{i}^{-1}\Lambda_{i}\Omega_{i}
            \end{bmatrix}.
        \end{aligned}
    \end{equation*}
    Note that $\Lambda_{k-j+1}$ and $\Lambda$ have disjoint spectra, we can perform the same trick in \cref{prop:nonsingular} to show that each factor in $\det S_{i,k}$ is nonzero generically, and in turn, $S_{i,d}$ is nonsingular generically.
\end{proof}

We conclude this section by providing expressions for $\fp_{k}$ for a generic $1 \leq k \leq d$, in an analogous manner. 
\begin{theorem}
    \label{thm:Chain}
    Suppose that $B_{1},\dotsc,B_{d}\in\R^{b\times b}$ admit spectral decompositions $B_{i}=\Omega_{i}^{-1}\Lambda_{i}\Omega_{i}$, where $\Lambda_{i}$ are real diagonal matrices for $i=1,\dotsc,d$. Assume $\Lambda_{i}$ and $\Lambda_{j}$ have disjoint spectra for $i\neq j$. For any $1\leq k\leq d$, denote
    \begin{equation*}
        \Omega_{i}^{(k)},B_{i}^{(k)},\Lambda_{i}^{(k)} \defi 
        \begin{cases}
            \Omega_{k},B_{k},\Lambda_{k} & \text{if } i=1, \\
            \Omega_{i-1},B_{i-1},\Lambda_{i-1} & \text{if } 2\leq i\leq k, \\
            \Omega_{i},B_{i},\Lambda_{i}  & \text{if } k+1\leq i\leq d,
        \end{cases} 
    \end{equation*}  
    We define the following quantities recursively: For each $i=d,d-1,\dotsc,2,1$, let  
    \begin{equation*}
        \widehat{\Omega}_{i}^{(k)} \defi \Omega_{i}^{(k)}S_{i,d}^{(k)},\quad 
        \widehat{B}_{i}^{(k)}\defi \bigl(\widehat{\Omega}_{i}^{(k)}\bigr)^{-1}\Lambda_{i}^{(k)}\widehat{\Omega}_{i}^{(k)},
    \end{equation*}
    where $S_{i,d}^{(k)}\in\R^{b\times b}$ is defined by the following recurrence:
    \begin{equation*}
        S_{i,i}^{(k)} \defi  I_{b},\quad  S_{i,j}^{(k)} \defi  B_{i}^{(k)}S_{i,j-1}^{(k)}-S_{i,j-1}^{(k)}\widehat{B}_{j}^{(k)} \quad\text{for}\quad j=i+1,\dotsc,d.
    \end{equation*} 
    Assuming that the matrix $\van$ in \cref{eq:defvand} is nonsingular, and all $S_{i,d}^{(k)}$ are nonsingular for $1\leq i\leq d$, then the fundamental matrix polynomials $\fp_{k}$ defined in \cref{def:fp} admits the following formula: 
    \begin{equation}
    \label{eq:deffp}
        \fp_{k}(\lambda I) = \Bigl(\prod_{i=2}^{d}(\lambda I-\widehat{B}_{i}^{(k)})\Bigr)\bigl(S_{1,d}^{(k)}\bigr)^{-1}\quad \forall\, \lambda\in\R.
    \end{equation}
\end{theorem}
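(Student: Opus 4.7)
The plan is to reduce \cref{thm:Chain} to \cref{lem:Chain} via a direct reindexing of the input tuple. Inspecting the definitions of $(\Omega_i^{(k)}, B_i^{(k)}, \Lambda_i^{(k)})$, we see that the superscript-$(k)$ family is precisely the permutation of the original tuple that places $B_k$ (respectively $\Omega_k, \Lambda_k$) in the first slot while preserving the relative order of the remaining matrices: position $1$ holds $B_k$, and positions $2,\ldots,d$ hold the entries of $\calB_{-k} = (B_1,\ldots,B_{k-1},B_{k+1},\ldots,B_d)$ in order.

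First, I would introduce shorthand $\widetilde{B}_i \defi B_i^{(k)}$, $\widetilde{\Omega}_i \defi \Omega_i^{(k)}$, $\widetilde{\Lambda}_i \defi \Lambda_i^{(k)}$ and verify that the hypotheses of \cref{lem:Chain} transfer to this tuple: the $\widetilde{\Lambda}_i$ are a permutation of the $\Lambda_i$ and therefore still have pairwise disjoint spectra; the spectral decompositions $\widetilde{B}_i = \widetilde{\Omega}_i^{-1}\widetilde{\Lambda}_i\widetilde{\Omega}_i$ are inherited from those of the original $B_j$; and the recursions defining $S_{i,j}^{(k)}$ and $\widehat{B}_i^{(k)}$ in the theorem coincide, term by term, with the recursions defining $S_{i,j}$ and $\widehat{B}_i$ in \cref{lem:Chain} when applied to the tuple $(\widetilde{B}_i)$. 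Consequently, the nonsingularity hypothesis on the $S_{i,d}^{(k)}$ is exactly what \cref{lem:Chain} demands of the $\widetilde{B}_i$.

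Second, I would observe that the fundamental matrix polynomial is invariant under this reindexing. By \cref{def:fp}, $\fp[\widetilde{B}_1, \widetilde{\calB}_{-1}]$ is characterized by evaluating to $I$ at $\widetilde{B}_1 = B_k$ and to $0$ at every $\widetilde{B}_j$ for $j \ge 2$. Since $\widetilde{\calB}_{-1} = \calB_{-k}$ as ordered tuples, these are precisely the interpolation conditions defining $\fp[B_k, \calB_{-k}]$, so $\fp[B_k, \calB_{-k}] = \fp[\widetilde{B}_1, \widetilde{\calB}_{-1}]$. Applying \cref{lem:Chain} to the tuple $(\widetilde{B}_i)$ then yields exactly the formula \cref{eq:deffp}.

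There is no new analytic difficulty beyond \cref{lem:Chain}; the proof is essentially a bookkeeping argument checking that the factor order in $\prod_{i=2}^{d}(\lambda I - \widehat{B}_i^{(k)})$ and the forward recursion for $S_{i,j}^{(k)}$ are consistent with the chosen permutation. The subtlest point is confirming that well-definedness of $\fp[\widetilde{B}_1, \widetilde{\calB}_{-1}]$ also passes through the reduction: permuting block rows changes $\det \van$ only by a sign, so the block Vandermonde nonsingularity required by \cref{prop:interpolation} to guarantee existence and uniqueness of the fundamental matrix polynomial is preserved under the reindexing.
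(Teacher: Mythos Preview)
Your proposal is correct and matches the paper's approach: the paper does not give a separate proof of \cref{thm:Chain} but simply states that the formulas for general $k$ follow ``in an analogous manner'' from \cref{lem:Chain}, and your reindexing argument is exactly the natural formalization of that remark. The only point you might make more explicit is that the uniqueness of fundamental matrix polynomials (from \cref{prop:interpolation}) is what allows you to identify $\fp[\widetilde{B}_1,\widetilde{\calB}_{-1}]$ with $\fp[B_k,\calB_{-k}]$ once the interpolation conditions agree, but you have essentially covered this.
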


We remark that \cref{eq:deffp1} is a special case of \cref{eq:deffp}. Moreover, similar to \cref{prop:wellpose}, the expression for $\fp_{k}$ in \cref{eq:deffp} is also well-defined generically.

\subsection{Growth of fundamental matrix polynomials}

Suppose that all eigenvalues of $B_{i}$ for $1\leq i\leq d$ are contained in $[\lambda_{\min}^{\cl},\lambda_{\max}^{\cl}]$, where $\cdot^{\cl}$ means cluster.
The next step is to control the growth of $\norm{\fp_{k}(\lambda I)}$ as $\lambda$ moves away from $[\lambda_{\min}^{\cl},\lambda_{\max}^{\cl}]$.
To this end, we introduce some quantities that allow us to derive bounds analogous to those for the Lebesgue function in the scalar polynomial case.

To avoid cumbersome notation involving the superscript $\cdot^{(k)}$, we first define the relevant quantities for $\fp_{1}$.
Recalling that $\widehat{B}_{i}=\widehat{\Omega}_{i}^{-1}\Lambda_{i}\widehat{\Omega}_{i}$, we let  
\begin{equation*}
    \chim^{(1)} \defi  \max_{2\leq i\leq d}\Bigl\{ \frac{\norm{\lambda_{\min}^{\cl}I-\widehat{B}_{i}}}{\norm{\lambda_{\min}^{\cl}I-\Lambda_{i}}},\frac{\norm{\lambda_{\max}^{\cl}I-\widehat{B}_{i}}}{\norm{\lambda_{\max}^{\cl}I-\Lambda_{i}}},1\Bigr\}.
\end{equation*}
For any $\lambda<\lambda_{\min}^{\cl}$, by \cref{eq:deffp} and submultiplicativity,
\begin{equation*}
    \begin{aligned}
    \norm{\fp_{1}(\lambda I)}&\leq \Bigl(\prod_{i=2}^{d}\norm{\lambda I-\widehat{B}_{i}}\Bigr) \norm{S_{1,d}^{-1}} \leq \Bigl(\prod_{i=2}^{d}\bigl(\lambda_{\min}^{\cl}-\lambda+\norm{\lambda_{\min}^{\cl} I-\widehat{B}_{i}}\bigr)\Bigr) \norm{S_{1,d}^{-1}}
    \\ &\leq \Bigl(\prod_{i=2}^{d}\bigl(\lambda_{\min}^{\cl}-\lambda+\chim^{(1)}\norm{\lambda_{\min}^{\cl} I-\Lambda_{i}}\bigr)\Bigr) \norm{S_{1,d}^{-1}}\\  
    &\leq \bigl((\lambda_{\max}^{\cl}-\lambda)\chim^{(1)}\bigr)^{d-1}\norm{S_{1,d}^{-1}}.
    \end{aligned}
\end{equation*}
Similarly, when $\lambda>\lambda_{\max}^{\cl}$, it holds that 
\begin{equation*}
    \norm{\fp_{1}(\lambda I)}\leq \bigl((\lambda-\lambda_{\min}^{\cl})\chim^{(1)}\bigr)^{d-1}\norm{S_{1,d}^{-1}}.
\end{equation*}
Now we need to quantify the coefficient term $\norm{S_{1,d}^{-1}}$. Taking $i=1$ and $k=d$ in \cref{eq:relationSik}, we have  
\begin{equation*}
    S_{1,d} = \prod_{i=1}^{d-1}\bigl(\Omega_{1}^{-1}\Lambda_{1}\Omega_{1}-(\widehat{\Omega}_{d-i+1}S_{1,d-i}^{-1})^{-1}\Lambda_{d-i+1}(\widehat{\Omega}_{d-i+1}S_{1,d-i}^{-1})\bigr).
\end{equation*}
Since each factor in the product is the difference between two matrices with eigenvalues $\Lambda_{1}$ and $\Lambda_{d-i+i}$, respectively, a proper scaling-independent quantity for measuring  $\norm{S_{1,d}^{-1}}$ is 
\begin{equation*}
    \chic^{(1)}\defi \norm{S_{1,d}^{-1}}^{\frac{1}{d-1}}\min_{\substack{2\leq i\leq d\\ \lambda_{1}\in \Lambda_{1}, \lambda_{i}\in \Lambda_{i}}} \abs{\lambda_{1}-\lambda_{i}}.
\end{equation*}
For a generic $1\leq k\leq d$, with notation in \cref{thm:Chain}, we define 
\begin{equation*}
    \begin{aligned}
        \chim^{(k)} &\defi  \max_{2\leq i\leq d}\Bigl\{ \frac{\bignorm{\lambda_{\min}^{\cl}I-\widehat{B}_{i}^{(k)}}}{\bignorm{\lambda_{\min}^{\cl}I-\Lambda_{i}^{(k)}}},\frac{\bignorm{\lambda_{\max}^{\cl}I-\widehat{B}_{i}^{(k)}}}{\bignorm{\lambda_{\max}^{\cl}I-\Lambda_{i}^{(k)}}},1\Bigr\},\\ 
        \chic^{(k)} &\defi \bignorm{(S_{1,d}^{(k)})^{-1}}^{\frac{1}{d-1}}\min_{\substack{2\leq i\leq d\\ \lambda_{1}^{(k)}\in \Lambda_{1}^{(k)}, \lambda_{i}^{(k)}\in \Lambda_{i}^{(k)}}} \abs{\lambda_{1}^{(k)}-\lambda_{i}^{(k)}},
    \end{aligned}
\end{equation*}
and their uniform bound as 
\begin{equation}
    \label{eq:defchi}
    \chim \defi \max_{1\leq k\leq d}\chim^{(k)}
    \quad\text{and}\quad
    \chic \defi \max_{1\leq k\leq d}\chic^{(k)}.  
\end{equation}
We remark that in the scalar case ($b=1$), where all scalar products commute, or $\Lambda_{i}=\lambda_{i} I_{b}$ are scalar matrices for all $i$, the quantities $\chim$ and $\chic$ simplify significantly to $\chim=1$ and $\chic\leq 1$.

By applying \cref{thm:Chain} to \cref{prop:interpolation}, we obtain the main theorem of this section, which provides an explicit representation and a growth estimate for the interpolation of matrix polynomials.
\begin{theorem}
    \label{thm:interpolation}
    Let $\Phi\in\poly_{d-1}(\R^{b\times b})$ be a matrix polynomial. Suppose that matrices $B_{1},\dotsc,B_{d}\in\R^{b\times b}$ have disjoint real spectra contained in $[\lambda_{\min}^{\cl},\lambda_{\max}^{\cl}]$. Assuming that $\van$ defined in \cref{eq:defvand} is nonsingular, we have   
    \begin{equation*}
        \Phi(\lambda I) = \sum_{k=1}^{d} \fp_{k}(\lambda I)\Phi(B_{k}) \quad \forall\, \lambda\in\R.
    \end{equation*} 
    Additionally, assume that the expression for $\fp_{k}$ in \cref{eq:deffp} are well-defined for $1\leq k\leq d$. Then for any $\lambda \notin [\lambda_{\min}^{\cl},\lambda_{\max}^{\cl}]$, it holds that 
    \begin{equation*}
        \norm{\Phi(\lambda I)} \leq \sqrt{d}\cdot
        \Bignorm{
        \begin{bmatrix}
         \Phi(B_{1})\\ 
         \vdots\\ 
            \Phi(B_{d})
        \end{bmatrix}}
        \cdot \max_{1\leq k\leq d}\norm{\fp_{k}(\lambda I)}
    \end{equation*}
    and
    \begin{equation}
        \label{eq:Gd}
        \max_{1\leq k\leq d} \norm{\fp_{k}(\lambda I)}^{\frac{1}{d-1}} \leq 
        \frac{\max\{\lambda_{\max}^{\cl}-\lambda,\lambda-\lambda_{\min}^{\cl}\}}{\min\limits_{\substack{1\leq i\neq j\leq d\\ \lambda_{i}\in \Lambda_{i}, \lambda_{j}\in \Lambda_{j}}} \abs{\lambda_{i}-\lambda_{j}}} \cdot \chim \chic.
    \end{equation}
\end{theorem}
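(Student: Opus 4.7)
The plan is to split the statement into three pieces and handle them in order: the interpolation identity, the norm bound on $\Phi(\lambda)$, and the growth estimate \cref{eq:Gd}. The first is essentially a restatement of \cref{prop:interpolation}, so I would just invoke that proposition directly, noting that the nonsingularity of $\van$ guarantees the existence and uniqueness of each $\fp[B_k,\calB_{-k}]$.

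For the norm bound on $\Phi(\lambda)$, I would rewrite the interpolation identity as a product of a row block matrix by a column block matrix:
\begin{equation*}
    \Phi(\lambda) = \bigl[\fp[B_1;\calB_{-1}](\lambda),\dotsc,\fp[B_d;\calB_{-d}](\lambda)\bigr]\cdot[\Phi^{\Ttran}(B_1),\dotsc,\Phi^{\Ttran}(B_d)]^{\Ttran}.
\end{equation*}
Submultiplicativity then reduces the task to bounding the spectral norm of the row block. For any row block $[Y_1,\dotsc,Y_d]$ one has $\norm{[Y_1,\dotsc,Y_d]}^2 = \norm{\sum_k Y_k Y_k^{\Ttran}} \leq \sum_k \norm{Y_k}^2 \leq d \max_k \norm{Y_k}^2$, which immediately yields the claimed $\sqrt{d}$ factor together with $\max_k \norm{\fp[B_k;\calB_{-k}](\lambda)}$.

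The more substantive step is \cref{eq:Gd}. Here I would repeat, for a generic $1\leq k\leq d$, the telescoping bound that the paper already carried out for $k=1$ in the paragraphs just before the theorem: starting from the explicit formula \cref{eq:deffp} in \cref{thm:Chain}, bound each factor $\norm{\lambda I-\widehat{B}_i^{(k)}}$ by $\norm{\lambda_{\min}^{\cl} I - \widehat{B}_i^{(k)}} + (\lambda_{\min}^{\cl}-\lambda)$ when $\lambda<\lambda_{\min}^{\cl}$ (and the symmetric estimate when $\lambda>\lambda_{\max}^{\cl}$), then substitute $\norm{\lambda_{\min}^{\cl} I - \widehat{B}_i^{(k)}}\leq \chim(B_k;\calB_{-k})\norm{\lambda_{\min}^{\cl} I - \Lambda_i^{(k)}}$, and collapse the product to obtain
\begin{equation*}
    \norm{\fp[B_k;\calB_{-k}](\lambda)} \leq \bigl(\max\{\lambda_{\max}^{\cl}-\lambda,\lambda-\lambda_{\min}^{\cl}\}\cdot \chim(B_k;\calB_{-k})\bigr)^{d-1}\bignorm{(S_{1,d}^{(k)})^{-1}}.
\end{equation*}
Taking $(d-1)$th roots and using the definition of $\chic(B_k;\calB_{-k})$ to convert $\bignorm{(S_{1,d}^{(k)})^{-1}}^{1/(d-1)}$ into $\chic(B_k;\calB_{-k})/\min_{j\neq k,\lambda_k\in\Lambda_k,\lambda_j\in\Lambda_j}\abs{\lambda_k-\lambda_j}$ gives a per-$k$ estimate. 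Finally I would take a maximum over $k$: since $\min_{j\neq k}\abs{\lambda_k-\lambda_j} \geq \min_{i\neq j}\abs{\lambda_i-\lambda_j}$ for every $k$, the per-$k$ denominator can be replaced by the uniform one in the theorem, and $\max_k \chim(B_k;\calB_{-k})\chic(B_k;\calB_{-k}) \leq \chim\chic$ by \cref{eq:defchi}.

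The main obstacle is mostly bookkeeping: the indices shift in \cref{thm:Chain} (the tuple $\Omega_i^{(k)},B_i^{(k)},\Lambda_i^{(k)}$) make the generic-$k$ telescoping argument notationally heavier than the $k=1$ case, and one has to confirm that replacing the per-$k$ minimum spectral distance by the global one only loosens the bound. There is no new analytic idea beyond the ingredients already developed in \cref{lem:boundMP,prop:interpolation,thm:Chain}; the rest is careful substitution.
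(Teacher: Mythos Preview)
Your proposal is correct and follows essentially the same route as the paper: the interpolation identity is \cref{prop:interpolation}, the $\sqrt{d}$ norm bound comes from writing the sum as a block product and using $\norm{[Y_1,\dotsc,Y_d]}^2\le d\max_k\norm{Y_k}^2$, and \cref{eq:Gd} is the per-$k$ telescoping estimate from Section~2.5 (carried out there for $k=1$) combined with the definitions in \cref{eq:defchi}. The paper states the theorem as a direct consequence of applying \cref{thm:Chain} to \cref{prop:interpolation} without spelling out these steps, so your write-up simply makes explicit what the paper leaves to the reader.
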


Consider $b\cdot d$ points $\lambda_{i_{1}}^{(i_{2})}$ in $[\lambda_{\min}^{\cl},\lambda_{\max}^{\cl}]$, where $1\leq i_{1}\leq d$ and $1\leq i_{2}\leq b$.
Compared with interpolation via scalar-valued Lagrange polynomials, using matrix polynomials in $\poly_{d-1}(\R^{b\times b})$ improves the growth of fundamental (scalar/matrix) polynomials from 
\begin{equation}
    \label{eq:cmpGrowth}
    \Bigl(\frac{\max\{\lambda_{\max}^{\cl}-\lambda,\lambda-\lambda_{\min}^{\cl}\}}{\min\limits_{\substack{ \abs{i_{1}-i_{2}}+\abs{j_{1}-j_{2}}\neq 0}} \abs{\lambda_{i_{1}}^{(i_{2})}-\lambda_{j_{1}}^{(j_{2})}}}\Bigr)^{bd-1} 
    \quad\text{to}\quad
    \Bigl(\frac{\max\{\lambda_{\max}^{\cl}-\lambda,\lambda-\lambda_{\min}^{\cl}\}}{\min\limits_{\substack{1\leq i\neq j\leq d\\ \lambda_{i}\in \Lambda_{i}, \lambda_{j}\in \Lambda_{j}}} \abs{\lambda_{i}-\lambda_{j}}} \cdot \chim \chic\Bigr)^{d-1}.
\end{equation}
\emph{If $\chim \chic$ is mild}, this improvement not only reduces the exponent from $bd-1$ to $b-1$ but also replaces the standard (relative) eigenvalue gap with the $b$th order (relative) eigenvalue gap. 
In particular, if $\lambda_{i_{1}}^{(i_{2})}=\lambda_{i_{1}}$ for all $1\leq i_{1}\leq d$ and $1\leq i_{2}\leq b$, that is, $\Lambda_{i}=\lambda_{i} I_{b}$ for all $1\leq i\leq d$, then the left-hand term in \cref{eq:cmpGrowth} becomes infinite as the denominator vanishes, whereas the right-hand term remains bounded by
\begin{equation*}
    \Bigl(\frac{\max\{\lambda_{\max}^{\cl}-\lambda,\lambda-\lambda_{\min}^{\cl}\}}{\min\limits_{1\leq i\neq j\leq d} \abs{\lambda_{i}-\lambda_{j}}} \Bigr)^{d-1}.
\end{equation*}

\section{Cluster robustness of RSBL}

In this section, we present the main result of this paper along with empirical evidence supporting our conjecture. We also mention an intrinsic challenge in the analysis of RSBL.

\subsection{Main theoretical result}
The following theorem provides a structural bound for the cluster robustness of RSBL defined in \cref{eq:defangle}.
\begin{theorem}
    \label{thm:sb}
    Let $A\in\R^{n\times n}$ be a symmetric matrix with spectral decomposition
    \begin{equation}
        \label{eq:decompA}
        A = \begin{bmatrix}
            Q & Q_{\perp}
        \end{bmatrix} 
        \begin{bmatrix}
            \Lambda&\\ 
            &\Lambda_{\perp}
        \end{bmatrix}
        \begin{bmatrix}
            Q & Q_{\perp}
        \end{bmatrix}^{\Ttran},
        \quad \text{where}\quad \Lambda = \diagm\{ \Lambda_{1},\dotsc,\Lambda_{d} \}
    \end{equation}
    with $\Lambda_{i}\in\R^{b\times b}$ for $1\leq i\leq d$.
    Assume that all eigenvalues in $\Lambda$ are contained in an interval $[\lambda_{\min}^{\cl},\lambda_{\max}^{\cl}]$ and eigenvalues in $\Lambda_{\perp}$ are outside $[\lambda_{\min}^{\cl},\lambda_{\max}^{\cl}]$. 
    Assume $m=n/b$ is an integer.
    Let $\Omega\in\R^{n\times b}$ be a Gaussian random initial matrix with partition 
    \begin{equation*}
        [
            Q, Q_{\perp}
        ]^{\Ttran}\Omega
        = 
        \begin{bmatrix}
            \Omega_{1}\\ \vdots \\ \Omega_{m}
        \end{bmatrix},
    \end{equation*}
    where $\Omega_{i}\in\R^{b\times b}$ for $1\leq i\leq m$. Assume that the $b$th order eigenvalue gap $\gap_{b}$ in \cref{eq:defgap} is positive. Let  
\begin{equation*}
    \begin{aligned}
        c_{\Omega} :&=  \sqrt{dn-bd^{2}}\cdot \max\limits_{1\leq i\leq d}\norm{\Omega_{i}^{-1}}\cdot\max_{d+1\leq j\leq m}\norm{\Omega_{j}}
        \cdot \max\limits_{d+1\leq j\leq m}\norm{\Omega_{j}}\norm{\Omega_{j}^{-1}}, \\  
        G_{d} :&= \max_{\substack{1\leq k\leq d\\ \lambda\in [\lambda_{\min},\lambda_{\max}]\setminus [\lambda_{\min}^{\cl},\lambda_{\max}^{\cl}]}}\bignorm{\fp_{k}(\lambda)},
    \end{aligned}
\end{equation*}
where $B_{k}=\Omega_{k}^{-1}\Lambda_{k}\Omega_{k}$ and $\fp_{k}$ is defined in \cref{eq:deffp} for $k=1,\dotsc,d$. Then, with probability one, 
\begin{equation}
    \label{eq:mainsb}
    \tan\angle\bigl(\range(Q),\mathcal{K}_{d}(A,\Omega)\bigr) \leq c_{\Omega}\cdot G_{d}.
\end{equation}
Furthermore, with probability at least $1-4\delta$, it holds that 
\begin{equation*}
    c_{\Omega}^{2}\leq  \frac{(dn-bd^{2})^{3}}{\delta^{4}}\Bigl(2\sqrt{b}+\sqrt{2\log\bigl(2(m-d)/\delta\bigr)}\Bigr).
\end{equation*}
\end{theorem}

\begin{proof}
    Recall the tail bounds for the smallest and largest singular value of Gaussian random matrices in \cite[Thm.~2.6 and Thm.~3.1]{Rudelson2010}:
    \begin{equation*}
        \Pr\bigl(\norm{\Omega_{0}}\leq 2\sqrt{b}+\sqrt{2\log(\delta/2)}\bigr) \geq 1-\delta\quad\text{and}\quad \Pr\bigl(\norm{\Omega_{0}^{-1}}\leq \sqrt{b}/\delta\bigr) \geq 1-\delta,
    \end{equation*}
    where $\Omega_{0}$ is a $b\times b$ Gaussian random matrix.
    Since all $\Omega_{i}$ and $\Omega_{j}$ are $b\times b$ Gaussian random matrices for $1\leq i\leq d<j\leq m$, the probabilistic result for $c_{\Omega}$ comes directly from taking union bounds on the tail bounds above.
    
    In the rest of the proof, we will focus on \cref{eq:mainsb}. Let $
    K = [Q^{\Ttran}\Omega,\dotsc,\Lambda^{d-1} Q^{\Ttran}\Omega]$ and $
    K_{\perp} = [Q_{\perp}^{\Ttran}\Omega,\dotsc,\Lambda_{\perp}^{d-1} Q_{\perp}^{\Ttran}\Omega]$.
    By \cite[Thm.~3.1]{Zhu2013}, we know that 
\begin{equation}
    \label{eq:pfthmmain1}
    \tan\angle\bigl(\range(Q),\mathcal{K}_{d}(A,\Omega)\bigr) = \norm{K_{\perp}K^{-1}}.
\end{equation} 
Since $\Omega$ is Gaussian, with probability one, all $\Omega_{i}$ are nonsingular.
By definition,  
\begin{equation*}
    K = \begin{bmatrix}
        \Omega_{1} & \Lambda_{1}\Omega_{1} & \cdots & \Lambda_{1}^{d-1}\Omega_{1}\\ 
        \Omega_{2} & \Lambda_{2}\Omega_{2} & \cdots & \Lambda_{2}^{d-1}\Omega_{2}\\ 
        \vdots & \vdots & \ddots & \vdots \\ 
        \Omega_{d} & \Lambda_{d}\Omega_{d} & \cdots & \Lambda_{d}^{d-1}\Omega_{d}
    \end{bmatrix}
    = D
    \begin{bmatrix}
        I & \Omega_{1}^{-1}\Lambda_{1}\Omega_{1} & \cdots & \Omega_{1}^{-1}\Lambda_{1}^{d-1}\Omega_{1}\\ 
        I & \Omega_{2}^{-1}\Lambda_{2}\Omega_{2} & \cdots & \Omega_{2}^{-1}\Lambda_{2}^{d-1}\Omega_{2}\\ 
        \vdots & \vdots & \ddots & \vdots \\ 
        I & \Omega_{d}^{-1}\Lambda_{d}\Omega_{d} & \cdots & \Omega_{d}^{-1}\Lambda_{d}^{d-1}\Omega_{d}
    \end{bmatrix},
\end{equation*}
where $D=\diagM{\Omega_{1},\Omega_{2},\dotsc,\Omega_{d}}$.
Recall that $B_{i} = \Omega_{i}^{-1}\Lambda_{i}\Omega_{i}$, we know $K = D\cdot\van$.
Similarly, denote $D_{\perp}=\diagM{\Omega_{d+1},\dotsc,\Omega_{m}}$ and $\Lambda_{\perp}=\diagm\{\lambda_{d+1},\dotsc,\Lambda_{m}\}$ with $\Lambda_{j}\in\R^{b\times b}$ for $d+1\leq j\leq m$, and let $B_{j}=\Omega_{j}^{-1}\Lambda_{j}\Omega_{j}$ and 
\begin{equation*}
    \van_{\perp} \defi
    \begin{bmatrix}
        I & B_{d+1} & \cdots & B_{d+1}^{d-1}\\ 
        I & B_{d+2} & \cdots & B_{d+2}^{d-1}\\ 
        \vdots & \vdots & \ddots & \vdots \\ 
        I & B_{m} & \cdots & B_{m}^{d-1}\\ 
    \end{bmatrix} \in\R^{bm\times bd},
\end{equation*}
it holds that $K_{\perp} = D_{\perp}\cdot\van_{\perp}$.   
In turn, by submultiplicativity of spectral norm, 
\begin{equation}
    \label{eq:pfthmmain2}
    \norm{K_{\perp}K^{-1}} \leq \norm{D_{\perp}}\norm{D^{-1}}\norm{\van_{\perp}\cdot\van^{-1}} \leq \norm{\van_{\perp}\cdot\van^{-1}} \max_{1\leq i\leq d}\norm{\Omega_{i}^{-1}}\max_{d+1\leq j\leq m}\norm{\Omega_{j}},
\end{equation}
where we use \cref{prop:nonsingular} to ensure the invertibility of $\van$.

In order to handle $\norm{\van_{\perp}\cdot\van^{-1}}$, we need to employ the matrix polynomial techniques developed in \Cref{sec:mplm}. 
By the definition of spectral norm, 
\begin{equation*}
    \begin{aligned}
        \norm{\van_{\perp}\cdot\van^{-1}} &= \max_{C^{\Ttran}C=I_{b}}\norm{\van_{\perp}\cdot\van^{-1}\cdot C} = \max_{(\van\cdot C)^{\Ttran}(\van\cdot C)=I_{b}}\norm{\van_{\perp}\cdot C}\\ 
        &= \max_{(\van\cdot C)^{\Ttran}(\van\cdot C)=I_{b}}\frac{\norm{\van_{\perp}\cdot C}}{\norm{\van\cdot C}} \leq \max_{\rk(C)=b}\frac{\norm{\van_{\perp}\cdot C}}{\norm{\van\cdot C}}.
    \end{aligned}
\end{equation*}
For any column full-rank matrix $C\in\R^{bd\times b}$ with the partition $C^{\Ttran}=[C_{0}^{\Ttran},\dotsc,C_{d-1}^{\Ttran}]$, where $C_{i}\in\R^{b\times b}$, we define a matrix polynomial $\Phi\in\poly_{d-1}(\R^{b\times b})$ as 
\begin{equation*}
    \Phi(X) = C_{0}+XC_{1}+\dotsb+X^{d-1}C_{d-1} \quad \forall\,X\in\R^{b\times b}.
\end{equation*} 
Then, by definitions of $\van$ and $\van_{\perp}$, we know 
\begin{equation*}
    \van\cdot C = 
    [   \Phi^{\Ttran}(B_{1}),\dotsc,
        \Phi^{\Ttran}(B_{d})]^{\Ttran}
    \quad \text{and}\quad 
    \van_{\perp}\cdot C = 
        [\Phi^{\Ttran}(B_{d+1}),\dotsc,
        \Phi^{\Ttran}(B_{m})]^{\Ttran}.
\end{equation*}
Then the spectral norm is bounded by 
\begin{equation}
    \label{eq:pfthmmain3}
    \norm{\van_{\perp}\cdot \van^{-1}} \leq \max_{\Phi\in\poly_{d-1}(\R^{b\times b})} \frac{\sqrt{m-d}\max\limits_{d+1\leq j\leq m}\norm{\Phi(B_{j})}}
    {\bignorm{[\Phi^{\Ttran}(B_{1}) 
        ,\dotsc,
        \Phi^{\Ttran}(B_{d})]^{\Ttran}}}.
\end{equation}
Note that all eigenvalues of $B_{j}$ for $d+1\leq j\leq m$ are in $[\lambda_{\min},\lambda_{\max}]\setminus [\lambda_{\min}^{\cl},\lambda_{\max}^{\cl}]$, \cref{lem:boundMP} yields that  
\begin{equation*}
    \max\limits_{d+1\leq j\leq m}\norm{\Phi(B_{j})}\leq \sqrt{b}\max_{d+1\leq j\leq m} \norm{\Omega_{j}}\norm{\Omega_{j}^{-1}} \max_{\lambda\in [\lambda_{\min},\lambda_{\max}]\setminus [\lambda_{\min}^{\cl},\lambda_{\max}^{\cl}]} \norm{\Phi(\lambda)}.
\end{equation*}
Recall that the assumption $\gap_{b}>0$ ensures that $\Lambda_{i}$ and $\Lambda_{j}$ have disjoint spectra for $i\neq j$, allowing us to use \cref{prop:wellpose} to claim that the expression of fundamental matrix polynomials in \cref{eq:deffp} are well-defined with probability one for $1\leq k\leq d$. Then we can apply \cref{thm:interpolation} to obtain that 
\begin{equation}
    \label{eq:pfthmmain4}
    \max_{\lambda\in [\lambda_{\min},\lambda_{\max}]\setminus [\lambda_{\min}^{\cl},\lambda_{\max}^{\cl}]} \norm{\Phi(\lambda)}
    \leq \sqrt{d}\cdot
        \bignorm{[\Phi^{\Ttran}(B_{1}) ,\dotsc,\Phi^{\Ttran}(B_{d})]^{\Ttran}} \cdot
        G_{d}.
    \end{equation}
    Combining \cref{eq:pfthmmain1,eq:pfthmmain2,eq:pfthmmain3,eq:pfthmmain4}, we conclude \cref{eq:mainsb}.
\end{proof}

The assumption $\gap_{b}>0$ essentially requires that the multiplicity of desired eigenvalues does not exceed $b$, which is, in fact, a necessary condition for the convergence of RSBL. To see this, suppose there exists an eigenvalue $\widetilde{\lambda}$ with multiplicity larger than $b$, and let $\widetilde{Q}$ be an orthonormal basis of the associated eigenspace. It holds that 
\begin{equation*}
    \dim \bigl( \range(\widetilde{Q})\cap \mathcal{K}_{\ell}(A,\Omega)\bigr) = 
    \dim \range\bigl(\widetilde{Q}\widetilde{Q}^{\Ttran}[\Omega,A\Omega,\dotsc,A^{\ell-1}\Omega]\bigr) = \dim \range(\widetilde{Q}\widetilde{Q}^{\Ttran}\Omega)\leq b
\end{equation*}
for any $\ell\geq 1$, implying that the Krylov subspace $\mathcal{K}_{\ell}(A,\Omega)$ contains an at most $b$-dimensional subspace of the eigenspace corresponding to $\widetilde{\lambda}$. Thus, RSBL can not find all eigenvectors corresponding to $\widetilde{\lambda}$. In particular, when $\widetilde{\lambda}=0$, this example reduces to the null space computation and explains why common wisdom \cite{Parlett1998} suggests employing a large-block Lanczos method.

Compared with the cluster robustness result for the single-vector Lanczos method in \cite[Eq.~(3.5)]{kressner2024randomized}, the dependence of $\delta$ in $c_{\Omega}$ is less favorable. This is due to the submultiplicativity of matrix norm used in \cref{lem:boundMP}, which does not arise in the single-vector case since all scalar products commute.

Another remark is that \cref{thm:sb} still applies when the number of eigenvalues of interest is not a multiple of the block size $b$, such as computing 10 eigenvectors with $b=3$. We can simply augment $\range(Q)$ with a few additional eigenvectors in $Q_{\perp}$ so that its dimension becomes a multiple of $b$. 

\subsection{Conjecture on cluster robustness}

In \cref{thm:sb}, the effect of spectral gaps in $[\lambda_{\min}^{\cl},\lambda_{\max}^{\cl}]$ is reflected in $G_{d}$. 
With notation in \cref{eq:defchi}, the inequality \cref{eq:Gd} yields that  
\begin{equation*}
    G_{d} \leq \Bigl(\frac{\chim\chic}{\gap_{b}}\Bigr)^{d-1}.
\end{equation*}
In order to further quantify the cluster robustness, we make the following conjecture.
\begin{conjecture}
    \label{conj:chi}
    Let $\Lambda_{1},\dotsc,\Lambda_{d}\in\R^{b\times b}$ be real diagonal matrices. Assume that $\Lambda_{i}$ and $\Lambda_{j}$ have disjoint spectra for $1\leq i<j\leq d$. Let $[\Omega_{1},\dotsc,\Omega_{d}]$ be a Gaussian random matrix where $\Omega_{i}\in\R^{b\times b}$. With high probability, the quantity $\chim\chic$ defined in \cref{eq:defchi} is bounded by a constant depending on $b$ and $d$, but independent of $\Lambda_{i}$ for $1\leq i\leq d$. In particular, this constant remains independent of the eigenvalue gaps both within and between the sets $\Lambda_{i}$.
\end{conjecture}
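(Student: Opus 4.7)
The plan is to separate the two factors $\chim$ and $\chic$, bound each in terms of the Gaussian conditioning of $\Omega_{i}$ and of certain intermediate matrices, and argue that all explicit dependence on the eigenvalues of $\Lambda_{i}$ cancels. As a preliminary reduction, observe that both quantities are invariant under the affine rescaling $\Lambda_{i}\mapsto \alpha\Lambda_{i}+\beta I$: by the recurrence in \cref{lem:Chain}, every $\widehat{B}_{i}^{(k)}$ transforms identically while $S_{i,j}^{(k)}$ scales by $\alpha^{j-i}$, and the normalizing denominators in \cref{eq:defchi} scale accordingly. One may therefore assume $[\lambda_{\min}^{\cl},\lambda_{\max}^{\cl}]=[0,1]$, leaving only the relative positions of the eigenvalues (together with the Gaussian randomness) as remaining degrees of freedom.

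To bound $\chim$, the similarity $\widehat{B}_{i}=\widehat{\Omega}_{i}^{-1}\Lambda_{i}\widehat{\Omega}_{i}$ immediately gives $\norm{\lambda I-\widehat{B}_{i}}\le \kappa(\widehat{\Omega}_{i})\norm{\lambda I-\Lambda_{i}}$, with $\kappa(\cdot)\defi\norm{\cdot}\norm{(\cdot)^{-1}}$, so that $\chim\le \max_{i,k}\kappa(\widehat{\Omega}_{i}^{(k)})$ where $\widehat{\Omega}_{i}^{(k)}=\Omega_{i}^{(k)}S_{i,d}^{(k)}$. For $\chic$, the factorization
\[
S_{1,d}^{(k)}=\prod_{j=1}^{d-1}\bigl(B_{1}^{(k)}-M_{j}^{(k)}\Lambda_{d-j+1}^{(k)}(M_{j}^{(k)})^{-1}\bigr),\qquad M_{j}^{(k)}=\widehat{\Omega}_{d-j+1}^{(k)}(S_{1,d-j}^{(k)})^{-1},
\]
derived in \cref{eq:relationSik}, reduces matters to controlling $\norm{(B_{1}-M\Lambda_{j}M^{-1})^{-1}}$ for various $M$. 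I would aim for a gap-cancelling estimate of the form
\[
\norm{(B_{1}-M\Lambda_{j}M^{-1})^{-1}}\cdot\min_{\lambda\in\Lambda_{1},\mu\in\Lambda_{j}}\abs{\lambda-\mu}\;\le\; C(b)\,\kappa(\Omega_{1})\kappa(M),
\]
which is the block analogue of the trivial scalar identity $(\lambda_{1}-\lambda_{j})^{-1}\cdot\abs{\lambda_{1}-\lambda_{j}}=1$. Granted such an estimate, raising it to the $1/(d-1)$ power and multiplying by the minimum-gap factor in the definition of $\chic$ would cancel all eigenvalue-gap dependence, leaving only a product of condition numbers of the $\Omega_{i}$ and of intermediate $S_{i,j}^{(k)}$'s---quantities depending solely on Gaussian data.

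It would then remain to bound these condition numbers with high probability by a function of $b$ and $d$. Each individual $\kappa(\Omega_{i})$ is controlled by the standard Gaussian tail bounds already used in the proof of \cref{thm:sb}, and an inductive union bound would translate these into a bound on $\kappa(S_{i,j}^{(k)})$ provided the previous step is in hand. The main obstacle---and what appears to block a rigorous proof---is the gap-cancelling estimate on $\norm{(B_{1}-M\Lambda_{j}M^{-1})^{-1}}$: the eigenvalues of a non-commuting difference $B_{1}-M\Lambda_{j}M^{-1}$ are not in general the pairwise differences $\lambda_{1}-\lambda_{j}$, and the associated Sylvester operator $X\mapsto B_{1}X-XM\Lambda_{j}M^{-1}$ can exhibit arbitrarily bad conditioning as $M$ varies even when the spectra of $B_{1}$ and $M\Lambda_{j}M^{-1}$ are well-separated. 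Establishing the cancellation rigorously beyond the scalar ($b=1$) and $d=2$ cases appears to require a new structural identity for non-commuting products---precisely the intrinsic difficulty the author identifies, which keeps the statement at the level of a conjecture despite the strong empirical evidence.
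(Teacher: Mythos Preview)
The statement you are attempting is a \emph{conjecture} in the paper, not a theorem: the paper offers no proof, only empirical evidence and a discussion of why a rigorous argument is currently out of reach. Your proposal correctly recognizes this and is, in effect, not a proof but a well-organized outline of where the difficulty lies; in that sense it is accurate and aligns closely with the paper's own diagnosis.

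A few remarks on the specifics. Your affine-invariance reduction is correct and a nice observation (it is implicit but not spelled out in the paper). Your bound $\chim\le\max_{i,k}\kappa(\widehat{\Omega}_{i}^{(k)})$ and your use of the factorization \cref{eq:relationSik} for $S_{1,d}^{(k)}$ are exactly the structural handles the paper exploits in defining $\chim$ and $\chic$. Where you identify the obstruction---the ``gap-cancelling'' estimate $\norm{(B_{1}-M\Lambda_{j}M^{-1})^{-1}}\cdot\min\abs{\lambda-\mu}\le C(b)\kappa(\Omega_{1})\kappa(M)$---is precisely the intrinsic challenge the paper isolates in \cref{eq:intrinsic} and \cref{lem:sb}: controlling the smallest singular value of $\Omega_{1}^{-1}\Lambda_{1}\Omega_{1}-\widetilde{\Omega}_{j}^{-1}\Lambda_{j}\widetilde{\Omega}_{j}$ when $\widetilde{\Omega}_{j}$ is non-orthogonal and has a complicated (though independent) distribution. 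You are right that this fails for the reason you give: the spectrum of the difference is not the set of pairwise eigenvalue differences, and known tools (e.g., broad connectivity for Hadamard-structured Gaussians) break once the entries lose independence.

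One small caution: even if the gap-cancelling estimate held, your inductive plan would still need to show that the intermediate $M_{j}^{(k)}$ have condition numbers bounded independently of the $\Lambda_{i}$, and these matrices themselves involve $S_{1,d-j}^{(k)}$, so the induction is genuinely two-level (as in \cref{fig:diagBinChain}) and the dependence on $b,d$ could compound rapidly. This does not invalidate your outline, but it is an additional layer the paper does not attempt to unwind either.
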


Under \cref{conj:chi}, we arrive at the following corollary on the cluster robustness of RSBL. We remark that this conjecture is consistent with the result for randomized single-vector ($b=1$) Lanczos method in \cite[Eq.~(3.5)]{kressner2024randomized}.

\begin{corollaryC}
    \label{conj:sb}
    Let $A$ be a symmetric matrix with the decomposition \cref{eq:decompA}, and $\Omega\in\R^{n\times b}$ be a Gaussian random matrix. Assume that $\gap_{b}$ in \cref{eq:defgap} is positive, then  
    \begin{equation*}
        \tan\angle\bigl(\range(Q),\mathcal{K}_{d}(A,\Omega)\bigr) \leq \Bigl(\frac{c_{b,d,n}}{\gap_{b}}\Bigr)^{d-1}
    \end{equation*}
    holds with high probability, where $c_{b,d,n}$ depends on $b$, $d$ and $n$.
\end{corollaryC}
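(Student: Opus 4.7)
The plan is to chain together three ingredients already in hand: the structural bound of \cref{thm:sb}, the Lebesgue-type growth estimate \cref{eq:Gd} from \cref{thm:interpolation}, and the hypothetical control on $\chim\chic$ supplied by \cref{conj:chi}. Since the conjecture is invoked as a hypothesis, the proof reduces essentially to combining and repackaging bounds that are already proven.

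First I would apply \cref{thm:sb}, which gives, almost surely,
\begin{equation*}
\tan\angle\bigl(\range(Q),\mathcal{K}_d(A,\Omega)\bigr) \leq c_\Omega \cdot G_d,
\end{equation*}
together with a tail bound on $c_\Omega$ that is polynomial in $n$, $b$, $d$ and $\log(1/\delta)$. Next I would bound $G_d$ by means of \cref{eq:Gd}: for any $\lambda \in [\lambda_{\min},\lambda_{\max}]\setminus [\lambda_{\min}^{\cl},\lambda_{\max}^{\cl}]$ the numerator $\max\{\lambda_{\max}^{\cl}-\lambda,\lambda-\lambda_{\min}^{\cl}\}$ does not exceed $\lambda_{\max}-\lambda_{\min}$, while the denominator equals $(\lambda_{\max}-\lambda_{\min})\cdot \gap$ by the definition of $\gap$ in \cref{eq:aspgap}. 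Dividing, taking the $(d-1)$-st power, and maximizing over $k$ yields
\begin{equation*}
G_d \leq \Bigl(\frac{\chim \chic}{\gap}\Bigr)^{d-1}.
\end{equation*}

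Third, \cref{conj:chi} supplies, with high probability, an upper bound $\chim\chic \leq C_{b,d}$ depending only on $b$ and $d$. A union bound with the high-probability event $\{c_\Omega \leq C_{b,d,n}(\delta)\}$ provided by \cref{thm:sb} then gives, simultaneously with high probability,
\begin{equation*}
\tan\angle\bigl(\range(Q),\mathcal{K}_d(A,\Omega)\bigr) \leq C_{b,d,n}(\delta) \cdot \Bigl(\frac{C_{b,d}}{\gap}\Bigr)^{d-1}.
\end{equation*}
Absorbing the prefactor into the $(d-1)$-st power by setting $c_{b,d,n} \defi C_{b,d}\cdot C_{b,d,n}(\delta)^{1/(d-1)}$ produces the advertised form of the bound.

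Since every step is either a direct application of an earlier result or a routine manipulation, the proof presents no serious obstacle once \cref{conj:chi} is assumed. The only delicate points are purely bookkeeping: one must keep track of the probability budget in the union bound, and the absorption $c_\Omega \mapsto c_\Omega^{1/(d-1)}$ slightly inflates the effective constant for small $d$. Neither affects the stated corollary, since $c_{b,d,n}$ is allowed to depend on all three parameters. The genuine mathematical difficulty lies entirely upstream, in justifying \cref{conj:chi} itself—this is precisely the non-commutativity obstruction flagged in the introduction, and is explicitly left open.
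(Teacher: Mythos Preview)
Your proposal is correct and matches the paper's own (implicit) derivation: the text immediately preceding the corollary already records $G_d \leq (\chim\chic/\gap)^{d-1}$ from \cref{eq:Gd}, and the corollary is then stated as the direct consequence of combining this with \cref{thm:sb} under \cref{conj:chi}. Your bookkeeping of the union bound and the absorption of $c_\Omega$ into $c_{b,d,n}$ is exactly the routine step the paper leaves to the reader.
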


\begin{remark}
    \label{rmk:sb}
    In both \cref{thm:sb,conj:sb}, we only assume that the eigenvalues in $\Lambda$ lie within the interval $[\lambda_{\min}^{\cl}, \lambda_{\max}^{\cl}]$, while the eigenvalues in $\Lambda_{\perp}$ lie outside this interval. Notably, $\Lambda$ may consist of either a cluster of interior eigenvalues or a subset of exterior eigenvalues. Moreover, the upper bound on $\tan\angle(\range(Q),\mathcal{K}_{d}(A,\Omega))$ is independent of the gap between $\Lambda$ and $\Lambda_{\perp}$.
\end{remark} 

\subsubsection{Empirical experiments}
\label{sec:numexp}
We verify \cref{conj:chi} and \cref{conj:sb} through numerical experiments implemented in Matlab 2022b and executed on an AMD Ryzen 9 6900HX Processor (8 cores, 3.3---4.9 GHz) with 32GB of RAM.
Scripts to reproduce
numerical results are publicly available at \url{https://github.com/nShao678/Small-block-Lanczos}.

The first part of numerical experiments is the empirical behaviors of the cluster robustness \cref{eq:defangle}, that is, \cref{conj:sb}.
Since Gaussian random matrices are invariant under rotation, we only consider diagonal matrices $A$ with the following form:
\begin{equation*}
    A = \diagm\{\Lambda,\Lambda_{\perp}\}\in\R^{n\times n}, \quad \text{where}\quad 
    \Lambda = \diagm\{\Lambda_{1},\dotsc,\Lambda_{d}\},
\end{equation*}
and $\Lambda_{k}$ are $b\times b$ diagonal matrices for $1\leq k\leq d$. Given a Gaussian random initial $\Omega\in\R^{n\times b}$, we first compute an orthonormal basis $V_{d}$ for the block Krylov subspace $\mathcal{K}_{d}(A,\Omega)$ by performing block Lanczos process with full reorthogonalization. Then the angle is computed by $
    \tan\angle (\range(Q),\mathcal{K}_{d}(A,\Omega)) = \norm{V_{\perp}V^{-1}}$,
where $V$ and $V_{\perp}$ contain the first $b\cdot d$ rows and rest part of $V_{d}$, respectively. In the following experiments, we set $n=1000$ and $bd=60$. 
As mentioned in \cref{rmk:sb}, we consider two $\Lambda_{\perp}$ as follows such that $\Lambda$ contains exterior eigenvalues or interior eigenvalues:
\begin{equation*}
    \Lambda_{\perp} = \begin{cases}
        -I-\diagm\{1/940,2/940,\dotsc,1\}\\ 
        \diagm\bigl\{-I-\diagm\{1/470,\dotsc,1\}, 4I+\diagm\{1/470,\dotsc,1\}\bigr\}
    \end{cases} .
\end{equation*}
For eigenvalues in $\Lambda_{k}$ with $1\leq k\leq d$, we set them as uniformly spaced in the interval 
\begin{equation*}
    [\alpha k+\beta(k-1),\alpha k +\beta(k+1)]/60 \subset [0,1],   
\end{equation*}
where $\alpha$ and $\beta$ are positive parameters to be specified later. Note that in this situation, the cluster radius is $\beta/60$, and the absolute gap between nearby  clusters is $\alpha/60$.

To study how $\tan\angle (\range(Q),\mathcal{K}_{d}(A,\Omega))$ depends on key parameters, we conduct two experiments summarized in \cref{fig:EP}. In both, we collect empirical results from 1000 independent trials while varying $d$. In the first experiment, we fix $\alpha = 1$ and vary the cluster radius as $\beta = 2^{-i}$ for $i = 1,\dotsc,12$, with $d = 2,3,4,5$. The results show that $\tan\angle (\range(Q),\mathcal{K}_{d}(A,\Omega))$ is independent of $\beta$. In particular, this suggests that the cluster robustness of RSBL is not related to the eigenvalue gaps within each $\Lambda_{i}$, and hence, the standard eigenvalue gap $\gap_{1}$ required in \cite{Chen2026}.
In the second, we fix $\beta = 10^{-4}$ and vary the spectral gap via $\alpha = 2^{-i}$ for $i = 1,\dotsc,10$, with $d = 2,3,4$. The outcome demonstrates that for both exterior and interior eigenvalues, $\tan\angle (\range(Q),\mathcal{K}_{d}(A,\Omega))$ is proportional to $\gap_{b}^{1-d}$, consistent with \cref{conj:sb}.

\begin{figure}[htbp]
    \centering
    
    \includegraphics[width=\figsizeD]{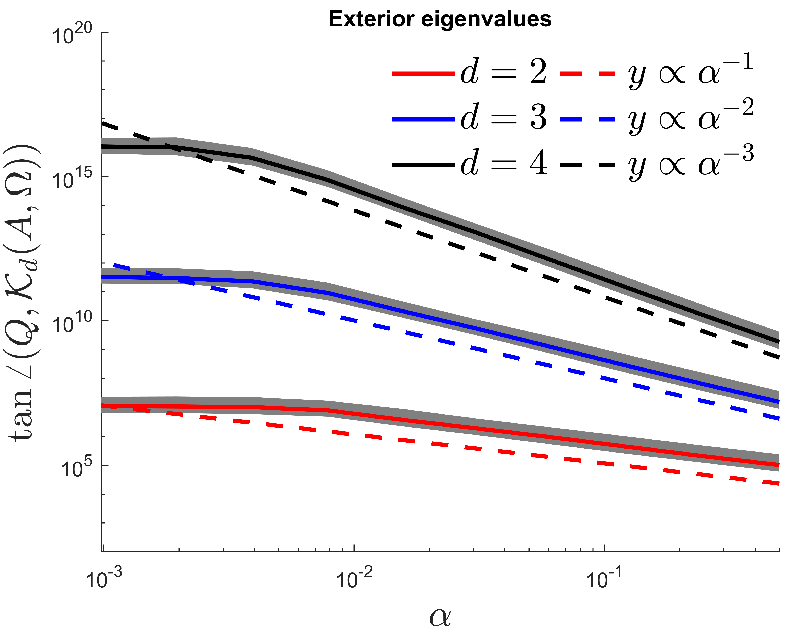}
    \includegraphics[width=\figsizeD]{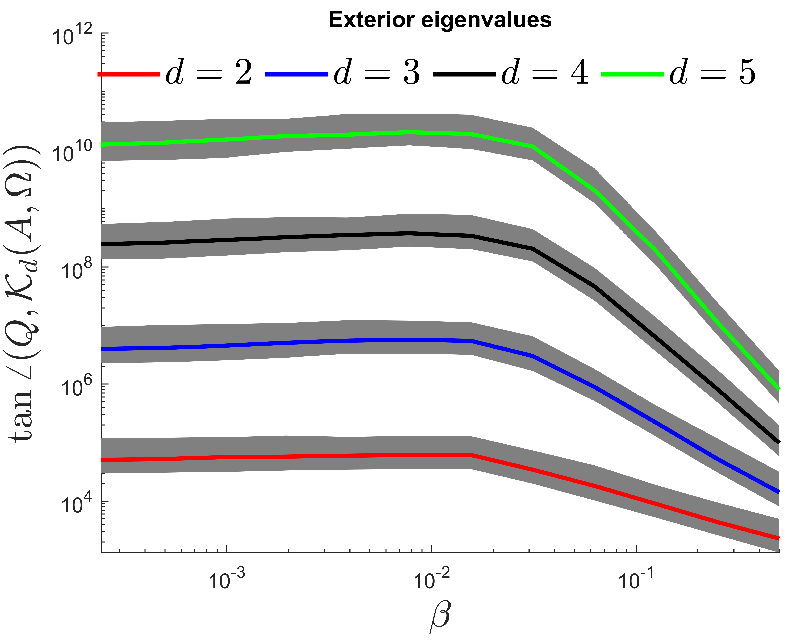}

    \includegraphics[width=\figsizeD]{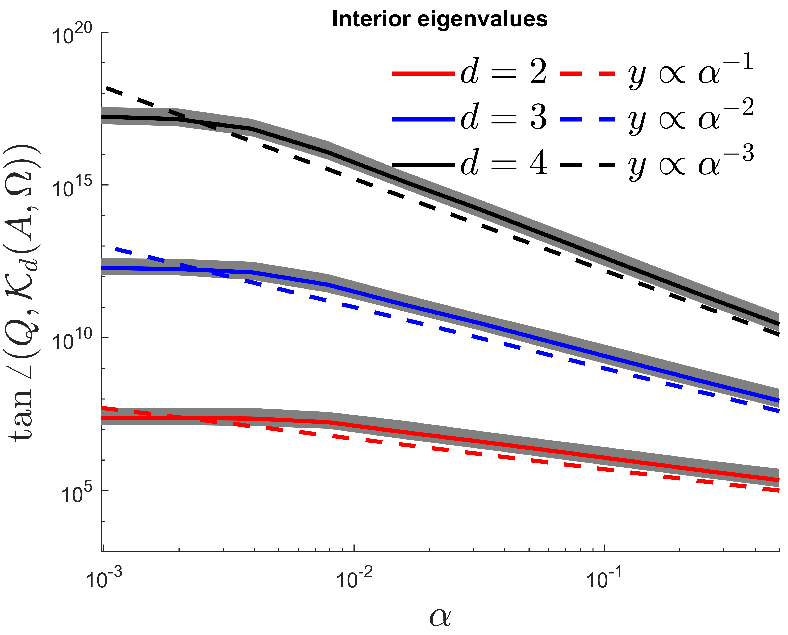}
    \includegraphics[width=\figsizeD]{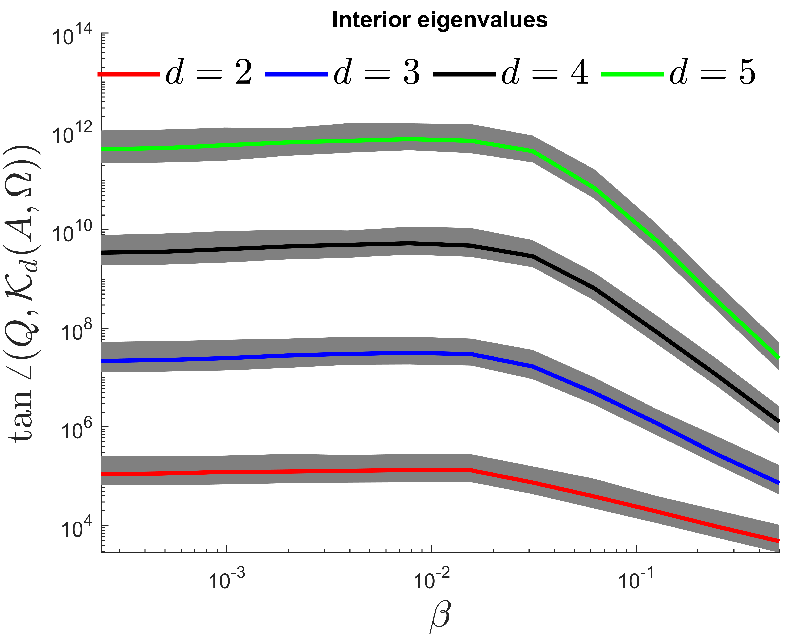}
    
    \caption{
        Cluster robustness for exterior eigenvalues (top) and interior eigenvalues (down) with different eigenvalue gaps. The left panels contain matrices with varying eigenvalue gaps between the clusters, whereas the right panels contain matrices with different eigenvalue gaps within each cluster. 
    The median of empirical $\tan\angle (\range(Q),\mathcal{K}_{d}(A,\Omega))$ over 1000 independent trials is plotted, with the 25th and 75th quartiles shaded in.}
    \label{fig:EP}
\end{figure}

\begin{remark}
    As already mentioned in \cref{rmk:intro}, although $\tan\angle (\range(Q),\mathcal{K}_{d}(A,\Omega))$ in \cref{fig:EP} appears quite large, this is indeed a regime in which RSBL performs effectively.
\end{remark}

The second part is about the empirical behavior of $\chim$ and $\chic$ conjectured in \cref{conj:chi}.
Note that these two parameters are not related to the rest eigenvalues in $\Lambda_{\perp}$.
Using the same setting as above, we collect empirical results in \cref{fig:EPchi}, suggesting that the upper bounds of these two quantities are independent of both gaps $\alpha$ between clusters and gaps $\beta$ within each clusters.
\begin{figure}[htbp]
    \centering
    \includegraphics[width=\figsizeD]{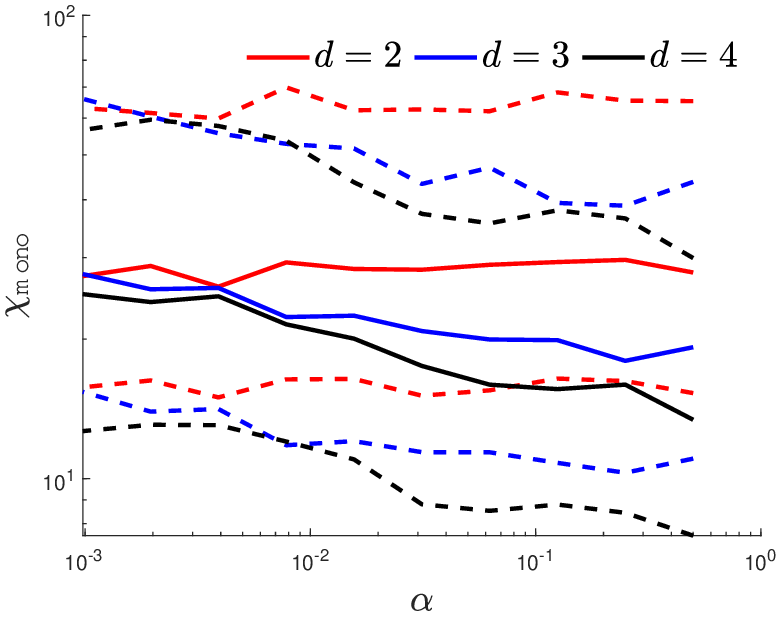}
    \includegraphics[width=\figsizeD]{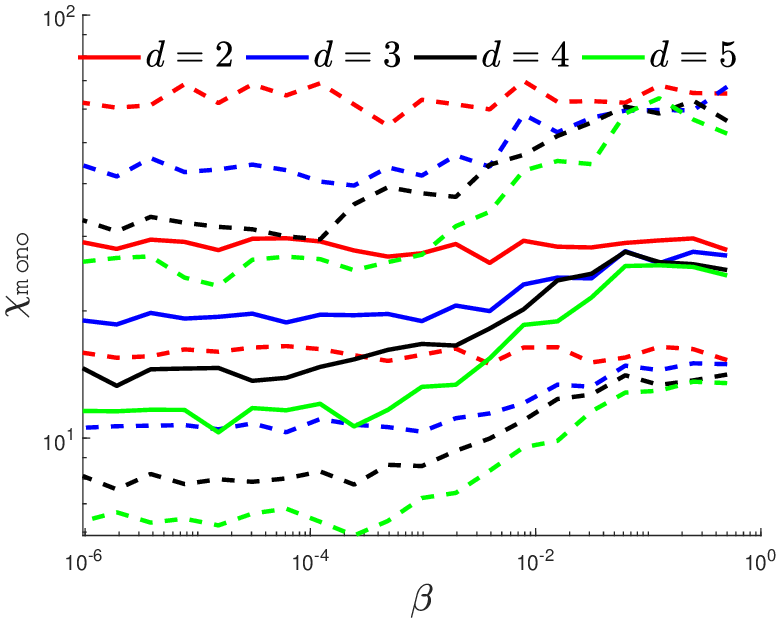}

    \includegraphics[width=\figsizeD]{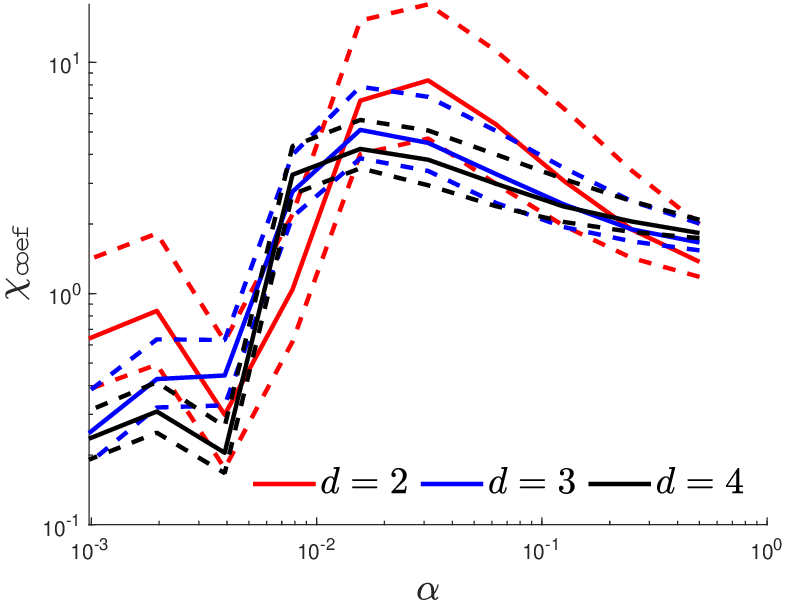}
    \includegraphics[width=\figsizeD]{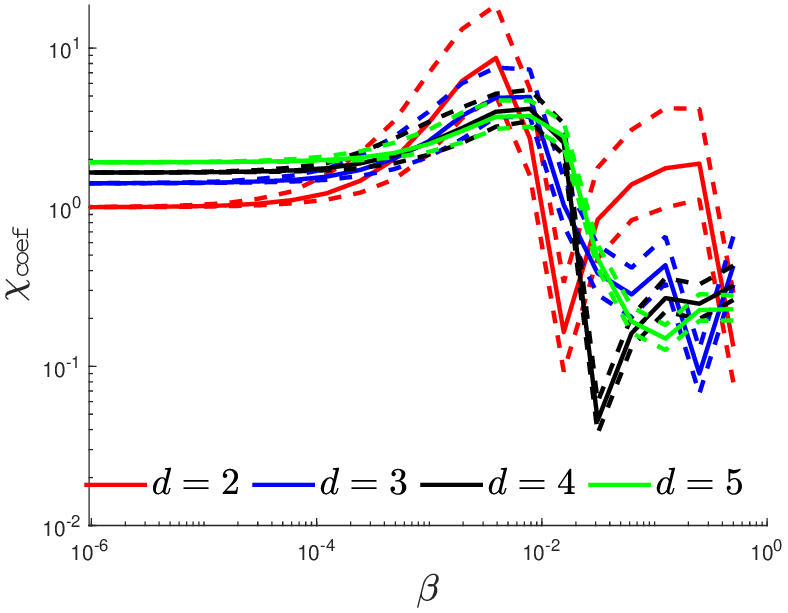}

    \caption{$\chim$ (top) and $\chic$ (bottom) with different eigenvalue gaps. The left panels contain matrices with varying eigenvalue gaps between the clusters, whereas the right panels contain matrices with different eigenvalue gaps within each cluster.
    The median of the empirical results over 1000 independent trials is plotted in solid lines, with the 25th and 75th quartiles in dashed lines.}
    \label{fig:EPchi}
\end{figure}

\subsubsection{Intrinsic challenge in analysis of RSBL}

Since both \cref{thm:sb} and \cref{conj:sb} are quite abstract, we illustrate them by working out the simplest non-trivial case: $d = 2$. Now, recalling that $\widehat{B}_{2} =B_{2} = \Omega_{2}^{-1} \Lambda_{2} \Omega_{2}$ and $\widehat{B}_{2}^{(2)} = B_{1} = \Omega_{1}^{-1} \Lambda_{1} \Omega_{1}$, we know
\begin{equation*}
    \begin{aligned}
        \chim &=  \max_{i=1,2} \max\Bigl\{ \frac{\norm{\lambda_{\min}^{\cl}I-B_{i}}}{\norm{\lambda_{\min}^{\cl}I-\Lambda_{i}}},\frac{\norm{\lambda_{\max}^{\cl}I-B_{i}}}{\norm{\lambda_{\max}^{\cl}I-\Lambda_{i}}},1\Bigr\}\leq\max_{i=1,2} \norm{\Omega_{i}}\norm{\Omega_{i}^{-1}},\\ 
    \chic &= \norm{(B_{1}-B_{2})^{-1}}\min_{\lambda_{1}\in \Lambda_{1}, \lambda_{2}\in \Lambda_{2}} \abs{\lambda_{1}-\lambda_{2}}.
    \end{aligned}
\end{equation*}
In this simple situation, $\chim$ can be bounded using existing results from random matrix theory; see~\cite{Rudelson2010}, for example. However, estimating $\chic$, or equivalently $\norm{(B_{1}-B_{2})^{-1}}$, is highly non-trivial since both $B_{1}$ and $B_{2}$ are \emph{nonsymmetric}, although its empirical behavior is good in \cref{fig:EPchi}.
In the following, we demonstrate that estimating $\norm{(B_{1}-B_{2})^{-1}}$ constitutes an intrinsic difficulty of RSBL. We first establish an equivalence between $\norm{(B_{1}-B_{2})^{-1}}$ and the smallest singular value of the block Vandermonde matrix $\van = \begin{bmatrix}
        I & B_{1} \\ 
        I & B_{2}
    \end{bmatrix}$. 
\begin{lemma}
    \label{lem:sb}
    Suppose that $B_{1}-B_{2}$ is nonsingular, then 
    \begin{equation*}
        \frac{3-\sqrt{5}}{2}\norm{(B_{1}-B_{2})^{-1}}^{2}\leq 
        \Bignorm{\begin{bmatrix}
        I & B_{1} \\ 
        I & B_{2}
    \end{bmatrix}^{-1}}^{2} \leq \frac{3+\sqrt{5}}{2}\bigl(2+(2\norm{B_{1}}^{2}+1)\norm{(B_{1}-B_{2})^{-1}}^{2}\bigr).
    \end{equation*}
\end{lemma}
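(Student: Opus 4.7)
The plan is to exploit the block LU factorization
\begin{equation*}
\begin{bmatrix} I & B_{1} \\ I & B_{2} \end{bmatrix} = L\, U,\qquad L\defi \begin{bmatrix} I & 0 \\ I & I \end{bmatrix},\qquad U\defi \begin{bmatrix} I & B_{1} \\ 0 & B_{2}-B_{1} \end{bmatrix},
\end{equation*}
so that $V^{-1}=U^{-1}L^{-1}$ with $L^{-1}=\begin{bmatrix} I & 0 \\ -I & I \end{bmatrix}$ and $U^{-1}=\begin{bmatrix} I & B_{1}(B_{1}-B_{2})^{-1} \\ 0 & -(B_{1}-B_{2})^{-1} \end{bmatrix}$. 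The crux of the argument is the observation that the constants $(3\pm\sqrt{5})/2$ in the lemma are precisely $\phi^{\pm 2}$ with $\phi=(1+\sqrt{5})/2$ the golden ratio, and a direct eigenvalue computation shows $\norm{L}^{2}=\norm{L^{-1}}^{2}=(3+\sqrt{5})/2$: indeed $L^{\T}L$ and $L^{-\T}L^{-1}$ reduce to block copies of $\begin{bmatrix} 2 & 1 \\ 1 & 1 \end{bmatrix}$ and $\begin{bmatrix} 2 & -1 \\ -1 & 1 \end{bmatrix}$, both with characteristic polynomial $\lambda^{2}-3\lambda+1$.

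For the upper bound I would apply submultiplicativity to get $\norm{V^{-1}}^{2}\leq \norm{U^{-1}}^{2}\norm{L^{-1}}^{2}=\tfrac{3+\sqrt{5}}{2}\norm{U^{-1}}^{2}$, and then bound $\norm{U^{-1}}^{2}$ by splitting $U^{-1}$ into its two block rows $R_{1}=[I,\,B_{1}(B_{1}-B_{2})^{-1}]$ and $R_{2}=[0,\,-(B_{1}-B_{2})^{-1}]$. The elementary inequality
\begin{equation*}
\Bignorm{\begin{bmatrix} R_{1} \\ R_{2} \end{bmatrix}}^{2}=\norm{R_{1}^{\T}R_{1}+R_{2}^{\T}R_{2}}\leq \norm{R_{1}}^{2}+\norm{R_{2}}^{2},
\end{equation*}
together with $\norm{R_{1}}^{2}=1+\norm{B_{1}(B_{1}-B_{2})^{-1}}^{2}$ (via $\norm{I+M}=1+\norm{M}$ for PSD $M$) and $\norm{R_{2}}^{2}=\norm{(B_{1}-B_{2})^{-1}}^{2}$, yields exactly the target factor $1+(\norm{B_{1}}^{2}+1)\norm{(B_{1}-B_{2})^{-1}}^{2}$ after one further submultiplicativity step on $\norm{B_{1}(B_{1}-B_{2})^{-1}}$.

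For the lower bound I would take the supremum defining $\norm{V^{-1}}$ and change variables via $y=L^{-1}x$, obtaining $\norm{V^{-1}}=\sup_{y\neq 0}\norm{U^{-1}y}/\norm{Ly}\geq \norm{U^{-1}}/\norm{L}$, hence $\norm{V^{-1}}^{2}\geq \norm{U^{-1}}^{2}/\tfrac{3+\sqrt{5}}{2}$. Choosing a test vector supported on the second block shows $\norm{U^{-1}}\geq \norm{(B_{1}-B_{2})^{-1}}$, and reciprocating the golden-ratio constant produces the claimed bound $\norm{V^{-1}}^{2}\geq \tfrac{3-\sqrt{5}}{2}\norm{(B_{1}-B_{2})^{-1}}^{2}$.

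The whole argument is a routine chain of norm estimates once the LU factorization is in place; the one genuinely nontrivial step is recognizing that the sharp constants arise from the singular values $\phi$ and $\phi^{-1}$ of the $2\times 2$ matrix $\begin{bmatrix} 1 & 0 \\ 1 & 1 \end{bmatrix}$. Without this observation one still obtains qualitatively correct bounds (for instance, $\norm{V^{-1}}^{2}\leq 2+4(\norm{B_{1}}^{2}+1)\norm{(B_{1}-B_{2})^{-1}}^{2}$ via a crude $(a+b)^{2}\leq 2a^{2}+2b^{2}$ expansion), but misses the sharp golden-ratio prefactors recorded in the lemma.
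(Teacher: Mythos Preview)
Your proposal is correct and follows essentially the same approach as the paper: both proofs use the block LU factorization $V=LU$ and reduce the problem to bounding $\norm{U^{-1}}$ together with the exact singular values of $L$ (which give the golden-ratio constants). The only cosmetic difference is in how you bound $\norm{U^{-1}}^{2}$ from above: the paper writes the variational problem $\max_{\norm{x}^{2}+\norm{y}^{2}=1}\norm{x+XYy}^{2}+\norm{Yy}^{2}$ and bounds it directly via Cauchy--Schwarz, whereas you split $U^{-1}$ into block rows and use $\norm{R_{1}^{\T}R_{1}+R_{2}^{\T}R_{2}}\leq\norm{R_{1}}^{2}+\norm{R_{2}}^{2}$; both routes yield the identical bound $1+(\norm{B_{1}}^{2}+1)\norm{(B_{1}-B_{2})^{-1}}^{2}$.
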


\begin{proof}
Let $X=-B_{1}$ and $Y=(B_{2}-B_{1})^{-1}$. The block LU factorization yields  
\begin{equation}
    \label{eq:appLU}
    \begin{bmatrix}
        I & B_{1} \\ 
        I & B_{2}
    \end{bmatrix}^{-1}
    = 
    \begin{bmatrix}
        I & XY \\  & Y
    \end{bmatrix}
    \begin{bmatrix}
        I & \\ -I & I
    \end{bmatrix}.
\end{equation}
We just need to bound the block upper triangular matrix in \cref{eq:appLU}.
On the one hand,
\begin{equation*}
    \norm{Y} = \max_{\norm{x}=1}\norm{Yx} \leq \max_{\norm{x}=1}\Bignorm{\begin{bmatrix}
        XYx\\ Yx
    \end{bmatrix}}  = \max_{\norm{x}=1}\Bignorm{\begin{bmatrix}
        I & XY \\ 
        & Y
    \end{bmatrix} \begin{bmatrix}
        0\\ x
    \end{bmatrix}} \leq \Bignorm{\begin{bmatrix}
        I & XY \\ 
        & Y
    \end{bmatrix} }.
\end{equation*} 
On the other hand, 
\begin{equation*}
    \begin{aligned}
        \Bignorm{\begin{bmatrix}
        I & XY \\ 
        & Y
    \end{bmatrix}}^{2} 
    &= \max_{\norm{x}^{2}+\norm{y}^{2}=1}
    \Bignorm{\begin{bmatrix}
        I & XY \\ 
        & Y
    \end{bmatrix}\begin{bmatrix}
        x\\ y
    \end{bmatrix}}^{2}
    = \max_{\norm{x}^{2}+\norm{y}^{2}=1}\norm{x+XYy}^{2}+\norm{Yy}^{2} \\ 
    & \leq \max_{\norm{x}^{2}+\norm{y}^{2}=1}  \norm{x}^{2}+ 2\norm{X}\norm{Y}\norm{x}\norm{y}+(\norm{X}^{2}+1)\norm{Y}^{2}\norm{y}^{2}
    \leq 2+(2\norm{X}^{2}+1)\norm{Y}^{2}.       
    \end{aligned}
\end{equation*}
Combining these two inequalities above with the singular values of the lower triangular matrix in \cref{eq:appLU}, we finish the proof. 
\end{proof}

The smallest singular value of the block Vandermonde matrix is, in fact, a fundamental measure of the cluster robustness. To illustrate this, we let
\begin{equation*}
    A_{2b} = \diagm\{1,\dotsc,2b\}\in\R^{2b\times 2b}
    \quad\text{and}\quad 
    A = \diagm\{A_{2b},3bI_{b},4bI_{b},\dotsc,mI_{b}\}\in\R^{n\times n}.
\end{equation*}
Recall the proof for \cref{thm:sb}, it holds that  
\begin{equation*}
    \tan\angle(\range(Q),\mathcal{K}_{d}(A,\Omega)) = D_{\perp}\cdot \van_{\perp}\cdot \van^{-1}\cdot D^{-1},
\end{equation*}
where $D_{\perp}$ and $D$ are both block diagonal matrices with independent Gaussian blocks. By tail bounds of singular values for Gaussian random matrices \cite{Rudelson2010}, we know that $\tan\angle(\range(Q),\mathcal{K}_{d}(A,\Omega))$ and $\norm{\van_{\perp}\cdot\van^{-1}}$ are equivalent (up to a low-degree polynomial of $n$) with high probability. Note that $B_{k}=\Omega_{k}^{-1}\Lambda_{k}\Omega_{k}=kbI_{b}$ for $3\leq k\leq m$ with probability one, which yields that 
\begin{equation*}
    \van_{\perp} = 
    \begin{bmatrix}
        I_{b} & B_{3}\\ 
        \vdots & \vdots\\ 
        I_{b} & B_{m}
    \end{bmatrix}
    =  \begin{bmatrix}
        I_{b} & 3bI_{b} \\ 
        \vdots & \vdots \\ 
        I_{b} & mbI_{b}
    \end{bmatrix}.
\end{equation*}
Consequently, $\norm{\van^{-1}}$ is equivalent to $\norm{\van_{\perp}\cdot\van^{-1}}$ with probability one, and thus also equivalent to $\tan\angle(\range(Q),\mathcal{K}_{d}(A,\Omega))$ with high probability. 

Therefore, estimating $\norm{(B_{1}-B_{2})^{-1}}$ is as challenging  as analyzing RSBL. For more general situations $d> 2$,  terms like this are in hidden in $\norm{S_{1,d}^{-1}}$ in \cref{lem:Chain}, and $\chim$ also becomes non-trivial due to the presence of $\widehat{B}_{i}$. Mathematically, we have to handle the smallest singular of the following operator: 
\begin{equation}
    \label{eq:intrinsic}
    \Lambda_{i}\Omega_{i}-\Omega_{i}\widetilde{\Omega}_{j}^{-1}\Lambda_{j}\widetilde{\Omega}_{j} = \Omega_{i}\Bigl(\Omega_{i}^{-1}\Lambda_{i}\Omega_{i}-\widetilde{\Omega}_{j}^{-1}\Lambda_{j}\widetilde{\Omega}_{j}\Bigr)\in\R^{b\times b},
\end{equation} 
where $\Omega_{i}$ is a Gaussian random matrix, $\widetilde{\Omega}_{j}$ is a random matrix with potentially complicated distribution but independent of $\Omega_{i}$, and $\Lambda_{i}$ and $\Lambda_{j}$ are deterministic diagonal matrices with disjoint spectra. 
When $b=1$, RSBL becomes single-vector Lanczos method. This analysis becomes trivial since all scalar products \emph{commute} and in turn, $\Omega_{i}$ and $\widetilde{\Omega}_{j}$ are cancelled with $\Omega_{i}^{-1}$ and $\widetilde{\Omega}_{j}^{-1}$, respectively. 
Another relatively simple scenario occurs when $\widetilde{\Omega}_{j}$ is an orthogonal matrix,  reduces the problem to estimating the smallest singular value of $\Lambda_{i,j}\circ \Omega_{i}$, where $\circ$ denotes the Hadamard product, and $\Lambda_{i,j}$ is a deterministic matrix containing the differences of eigenvalues between $\Lambda_{i}$ and $\Lambda_{j}$. Thanks to the independence of entries in $\Lambda_{i,j}\circ \Omega_{i}$, its smallest singular value can be bounded via the so-called broad connectivity; see, \cite{Rudelson2016,Cook2018} for example. However, when $\widetilde{\Omega}_{j}$ is a non-orthogonal matrix, the nonsymmetry of $\widetilde{\Omega}_{j}^{-1}\Lambda_{j}\widetilde{\Omega}_{j}$ breaks the independence of entries in $\Lambda_{i,j}\circ \Omega_{i}$, causing existing arguments to fail. 
Thus, estimating the smallest singular value of the matrix in \cref{eq:intrinsic} presents an intrinsic challenge in the analysis of the RSBL.

\subsubsection{Applications}
\label{sec:app}
\paragraph{Eigenvalue computation:}
Suppose we are interested in the largest $b\cdot d$ eigenvalues of a symmetric matrix~$A$. Assume that the relative gap defined in \cref{eq:defgap} is positive, and $\lambda_{bd}-\lambda_{bd+1}>0$, where $\lambda_{bd}$ and $\lambda_{bd+1}$ are the largest $(b\cdot d)$th and $(bd+1)$st eigenvalues of $A$, respectively. Applying \cref{conj:sb} to \cite[Lem.~3.4]{kressner2024randomized} with some shift of indices, we know that, with high probability,
\begin{equation}
    \label{cor:eig}
        \tan\angle\bigl(\range(Q),\mathcal{K}_{\ell}(A,\Omega)\bigr) \leq \frac{\bigl(c_{b,d,n}/\gap_{b}\bigr)^{d-1}}{\cheb_{\ell-d}(1+
        2\frac{\lambda_{bd}-\lambda_{bd+1}}{\lambda_{\max}-\lambda_{\min}})},
\end{equation}
where $Q$ contains the first $b\cdot d$ eigenvectors, $c_{b,d,n}$ comes from \cref{conj:sb} and $\cheb_{\ell-d}$ is the Chebyshev polynomial of degree $(\ell-d)$ defined as 
\begin{equation*}
    \cheb_{\ell-d}(\lambda) = \frac{1}{2}\Bigl(\bigl(\lambda+\sqrt{\lambda^{2}-1}\bigr)^{\ell-d}+\bigl(\lambda+\sqrt{\lambda^{2}-1}\bigl)^{d-\ell}\Bigr).
\end{equation*}

When $d = 1$, \cref{cor:eig} recovers the convergence result for randomized single-vector Lanczos method from \cite[Thm.~3.1]{kressner2024randomized}. Beyond enabling a potentially larger gap parameter $\gap_{b}$, choosing a block size $b > 1$ can substantially reduce the exponent of $\gap_{b}^{-1}$, thereby enhancing robustness to eigenvalue clusters. Although the number of matvecs required to achieve a polynomial acceleration of degree $\ell - d$ increases from $\ell$ to $b\ell$, the overall complexity remains comparable when $\gap_{b}$ is small. In practice, the small-block variant can even outperform its single-vector counterpart due to lower communication overhead.

\paragraph{Low-rank approximation:}
Given a matrix $\widehat{A} \in\R^{N\times n}$ with $N\geq n$, one can find a good rank-$bd$ approximation by running Lanczos for $A=\widehat{A}^{\Ttran}\widehat{A}$. Different from eigenvalue problems, finding top $b\cdot d$ eigenvectors of $A$ is sufficient but not necessary for obtaining a good low-rank approximation \cite{Drineas2019}. 
Denote $V$ as the top $b\cdot d$ Ritz vectors extracted from the (block) Krylov subspace $\mathcal{K}_{\ell}(A,\Omega)$.
Plugging \cref{conj:sb} into \cite[Thm.~3.1]{Meyer2023}, we know that when 
\begin{equation}
    \label{cor:lr}
    \ell \geq  \frac{c}{\sqrt{\epsilon}}\bigl(\log\frac{1}{\epsilon}+d\log\frac{c_{b,d,n}}{\gap_{b}}\bigr),
\end{equation}
we can extract a low-rank approximation from $\mathcal{K}_{\ell}(A,\Omega)$ satisfying
\begin{equation*}
    \norm{\widehat{A}-\widehat{A}VV^{\Ttran}} \leq (1+\epsilon) \norm{\widehat{A}-\widehat{A}_{bd}} 
    \quad \text{and}\quad
    \norm{\widehat{A}-\widehat{A}VV^{\Ttran}}_{\fro} \leq (1+\epsilon) \norm{\widehat{A}-\widehat{A}_{bd}}_{\fro}
\end{equation*}
with high probability, where $c>0$ is an absolute constant, $\widehat{A}_{bd}$ is the best rank-$bd$ approximation of $\widehat{A}$. Moreover, let $v_{i}$ be the $i$th Ritz vector for $1\leq i\leq bd$, then 
\begin{equation*}
    \bigabs{\norm{\widehat{A}v_{i}}^{2}-\lambda_{i}}\leq \epsilon \lambda_{bd+1},
\end{equation*}
where $\lambda_{i}$ and $\lambda_{bd+1}$ are the $i$th and $(bd+1)$st  largest eigenvalues of $A$, respectively.
    
Compared with \cite[Thm.~4.5]{Meyer2023}, one $b$ factor is removed in the coefficient of $\log(1/\gap_{b})$ in \cref{cor:lr}. As a consequence, it achieves the conjecture ``the matrix-vector complexity scales linearly in the target rank for any block size'' in \cite[p.~823]{Meyer2023}.

\section{Conclusion}

In this paper, we presented a structural bound in \cref{thm:sb} along with a conjectured probabilistic bound to quantify the cluster robustness in RSBL for eigenvalue computation. The conjecture offers insights into the behavior of a class of randomized small-block eigensolvers and provides a foundation for further theoretical exploration. Future work will focus on establishing a rigorous probabilistic bound to support the conjectured results. 
This may involve the recently developed comparison theorems \cite{tropp2025comparison,Brailovskaya2024}.

\section*{Acknowledgments}
The author thanks Daniel Kressner for insightful discussions and constructive comments that improved this work. He also thanks Ethan N. Epperly and Tyler Chen for their helpful comments, as well as the two anonymous referees, whose critical insights significantly enhanced the presentation.

\bibliographystyle{abbrvurl}

\end{document}